\DeclareMathOperator\erf{erf}
\theoremstyle{definition}
\newtheorem{definition}{Definition}[section]
\theoremstyle{theorem}
\newtheorem{theorem}{Theorem}[section]
\theoremstyle{lemma}
\newtheorem{lemma}{Lemma}[section]
\theoremstyle{corollary}
\theoremstyle{theorem}
\newtheorem{remark}{Remark}
\theoremstyle{theorem}
\title{An exact solution to a Stefan problem with variable thermal conductivity and a Robin boundary condition}
\author[1,2]{Andrea N. Ceretani\footnote{{\sc Corresponding author}:\\ \hspace*{0.6cm}Andrea N. Ceretani. E-mail: aceretani@austral.edu.ar}}
\author[3,4]{Natalia N. Salva}
\author[1]{Domingo A. Tarzia}
\affil[1]{{\footnotesize CONICET - Depto. de Matem\'atica, Facultad de Ciencias Empresariales, Univ. Austral, Paraguay 1950, S2000FZF Rosario, Argentina.}}
\affil[2]{{\footnotesize Depto. de Matem\'atica, Facultad de Ciencias Exactas, Ingenier\'ia y Agrimensura, Univ. Nacional de Rosario, Pellegrini 250, S2000BTP Rosario, Argentina.}}
\affil[3]{{\footnotesize CONICET - CNEA, Depto. de Mec\'anica Computacional, Centro At\'omico Bariloche, Av. Bustillo 9500, 8400 Bariloche, Argentina.}}
\affil[4]{{\footnotesize Depto. de Matem\'atica, Centro Regional Bariloche, Univ. Nacional del Comahue, Quintral 250, 8400 Bariloche, Argentina.}}
\date{}
\begin{document}  
\maketitle

\begin{abstract}
In this article it is proved the existence of similarity solutions for a one-phase Stefan problem with temperature-dependent thermal conductivity and a Robin condition at the fixed face. The temperature distribution is obtained through a generalized modified error function which is defined as the solution to a nonlinear ordinary differential problem of second order. It is proved that the latter has a unique non-negative bounded analytic solution when the parameter on which it depends assumes small positive values. Moreover, it is shown that the generalized modified error function is concave and increasing, and explicit approximations are proposed for it. Relation between the Stefan problem considered in this article with those with either constant thermal conductivity or a temperature boundary condition is also analysed.  
\end{abstract}

{\bf Keywords}: Stefan problems, exact solutions, temperature dependent thermal conductivity, convective boundary conditions, modified error function, phase-change processes. 

\section{Introduction}\label{Sect:Introduction}
The understanding of phase-change processes has been inspiring scientists from the earlier 18th century. Already in 1831, Lam\'e and Clapeyron studied problems related to the solidification of the Earth planet \cite{LaCl1831}. Also the mathematical formulation of phase-change processes as {\em free boundary problems} dates from the 18th century, since it owes much to the ideas developed by Stefan in 1889 \cite{St1889-1,St1889-2,St1889-3}. At present, their study is still an active area of research. Besides phase-change process are interesting in themselves, they attract interests because they are present in a wide variety of situations, both natural and industrial ones. Glass manufacturing and continuous casting of metals are examples of industrial activities involving them, some recent works in this area are \cite{BoIv2014,GaToTu2009}. Controlling side-effects of certain industrial processes or preventing future problems derived from our energy-dependent lifestyle, are also examples of how phase-change 
processes arise as a subject of study \cite{FuFaPr2014, ElKh2013}. Permafrost phenomena or dynamics of snow avalanches are examples of natural situations whose study involves phase-change processes, some recent articles in these subjects are \cite{KuHa2016, AkMoSt2014,CaFuFa2016}. We refer the reader to \cite{ChShVa2015,Ta2011} and the references therein for a recent survey in applications and future challenges in free boundary problems. Other references can be seen in the last published Free Boundary Problems International Conference Proceedings \cite{FiRoSa2007}.      

In this article we will focus on phase-change processes that are ensued from an external temperature imposed at some part of the fixed boundary of a homogeneous material. A classical simplification in modelling this sort of phenomena is to consider boundary conditions of Dirichlet type (temperature conditions). This is based on the assumption that heat is instantaneously transferred from the external advise through which a specific temperature is imposed to the material. In view that is physically unrealistic, several authors have suggested to consider conditions of Robin type (convective conditions) since they mimic the fact that the heat transfer at the boundary is proportional to the difference between the imposed temperature and the one the material presents at its boundary (see for example the books \cite{AlSo1993,CaJa1959}). Another classical simplification when modelling phase-change processes is to consider that thermophysical properties are constant. Though it is reasonable for most phenomena under 
moderate temperature variations \cite{AlSo1993}, it is not what actually happens as a rule. In fact, this hypothesis has been removed in many works in the attempt to improve the mathematical model (see, for example \cite{CeTa2014,BrNa2015,SaTa2011-b,Vo2004}). All this have encouraged us to look at phase-change processes with convective boundary conditions and non-constant physical properties. 

In 1974, Cho and Sunderland studied a phase-change process for a one-dimensional semi-infinite material with temperature-dependent thermal conductivity \cite{ChSu1974}. The dependence was assumed to be linear, which  is a quite good approximation of what actually happens with several materials (water, for example \cite{AlSo1993}). The phase-change process was assumed to be ensued from a constant temperature imposed at the fixed boundary of the body, what was modelled through a Dirirchlet condition. For the resulting Stefan problem, Cho and Sunderland have presented an exact similarity solution. The temperature was obtained through an auxiliary function $\Phi$ that they have called a {\em Modified Error} (ME) function and that was defined as the solution to a nonlinear ordinary differential problem of second order. Revisiting the work of Cho and Sunderland, a couple of curiosities have arised. On one hand, the existence of the ME function was not proved there. Despite of this lack of theoretical results, 
the ME function was widely used in the context of phase-change processes before their existence and uniqueness were proved in the recent article \cite{CeSaTa2017} (see, for example, \cite{Cr1956,Wa1950,BrNaTa2007,FrVi1987,Lu1991,OlSu1987,SaTa2011-b,SaTa2011-c,Ta1998,CoKa1994}). On the other hand, by following the arguments presented in \cite{ChSu1974} it is obtained that the ME function must satisfy a differential problem over a closed bounded interval $[0,\lambda]$ with $\Phi(0)=0$, $\Phi(\lambda)=1$. Nevertheless, in \cite{ChSu1974} it was considered a boundary value problem over $[0,+\infty)$ with $\Phi(0)=0$, $\Phi(+\infty)=1$. Although in this way it is clearer the relation between the modified and classical error functions (see \cite{ChSu1974,CeSaTa2017} for further details),
the change made by Cho and Sunderland add some extra conditions on the temperature function.  

In this article we consider a similar phase-change process to that studied in \cite{ChSu1974}. We are mainly motivated by: a) improving the modelling of the imposed temperature at the fixed boundary by considering a convective boundary condition, b) obtaining a solution of similarity type without any extra condition on the temperature distribution. We will study a solidification process, but a completely similar analysis can be done for the case of melting. Aiming for simplicity, we will restrict our presentation to a one-phase process. That is, the case in which the material is initially liquid at its freezing temperature.

The organisation of the paper is as follows. First (Sect. \ref{Sect:characterisation}), we introduce the one-phase Stefan problem through which we will study the phase-change process. In this Section we also present a characterisation for any similarity solution to the Stefan problem in terms of a {\em Generalized Modified Error} (GME) function. This will be defined as the solution to a nonlinear boundary value problem of second order. Similarly to \cite{ChSu1974}, this problem will depend on a positive parameter $\beta$ related to the slope of the thermal conductivity as a linear function of the temperature distribution. Next (Sect. \ref{Sect:EyU}), we analyse the existence of the similarity solutions given in Section \ref{Sect:characterisation}. In particular, we prove that there exists a unique non-negative analytic GME function when $\beta$ assumes small positive values. We also prove that this GME function is concave and increasing, and explicit approximations are proposed for it. Finally (Sect. \ref{Sect:Comparison}), we discuss about how the Stefan problem presented in Section \ref{Sect:characterisation} is related to those studied in \cite{ChSu1974} (Dirichlet condition at the fixed boundary) and \cite{Ta2017} (constant thermal conductivity). 

\section{The Stefan problem.}\label{Sect:characterisation}

The one-phase solidification process introduced in Section \ref{Sect:Introduction} will be studied through the following Stefan problem:

\begin{subequations}\label{StefanProblem}
\begin{align}
\label{Eq:heat}&\rho c T_t(x,t)=(k(T(x,t))T_x(x,t))_x&0<x<s(t),\,t>0\\
\label{Cond:s0}&s(0)=0&\\
\label{Cond:FreeBoundaryTemp}&T(s(t),t)=T_f&t>0\\
\label{Cond:Stefan}&k(T_f)T_x(s(t),t)=\rho l\dot{s}(t)&t>0\\
\label{Cond:Conv}&k(T(0,t))T_x(0,t)=\frac{h_0}{\sqrt{t}}(T(0,t)-T_\infty)&t>0
\end{align}
\end{subequations}

\noindent In (\ref{StefanProblem}), the unknown functions are the temperature $T$ of the solid region and the free boundary $s$ separating the phases. The parameters $\rho>0$ (density), $c>0$ (specific heat), $l>0$ (latent heat per unit mass), $h_0>0$ (coefficient related to the heat transfer at $x=0$), $T_f\in\mathbb{R}$ (freezing temperature) and $T_\infty<T_f$ (constant temperature imposed in the neighbourhood of the boundary $x=0$) are all known constants. The function $k$ (thermal conductivity) is defined as:
\begin{equation}\label{K}
k(T)=k_0\left(1+\beta\frac{T-T_\infty}{T_f-T_\infty}\right),
\end{equation}  
where $k_0>0$, $\beta>0$ are given constants.

\begin{remark}\label{Re:Conv-Dirichlet}
Let us assume for a moment  that for each $h_0>0$ exists a solution to problem (\ref{StefanProblem}) such that $T(0,\cdot)$, $T_x(0,\cdot)$ admit bounds independent of $h_0$ (what actually happens in the most common physical situations). Then, by taking the limit when $h_0\to\infty$ for each fixed $t>0$ in (\ref{Cond:Conv}), we obtain:
\begin{equation}\label{Cond:Dirichlet}
T(0,t)=T_\infty\quad t>0.\tag{\ref{Cond:Conv}$^\dag$}
\end{equation}     
In other words, if we were able to consider an infinite value for the heat transfer coefficient $\frac{h_0}{\sqrt{t}}$ in the convective boundary condition (\ref{Cond:Conv}), then the temperature function given through problem (\ref{StefanProblem}) would satisfy the temperature boundary condition (\ref{Cond:Dirichlet}). Thus, the mathematical framework given by problem (\ref{StefanProblem}) agrees well with the physical ideas about temperature and convective boundary conditions discussed in Section \ref{Sect:Introduction} (see, for example, \cite{AlSo1993,CaJa1959} for a detailed explanation of physical interpretations of boundary conditions).       
\end{remark}
  
We are interested here in obtaining a similarity solution to problem (\ref{StefanProblem}). More precisely, one in which the temperature $T(x,t)$ can be written as a function of the single variable $\frac{x}{2\sqrt{\alpha_0t}}$, where $\alpha_0=\frac{k_0}{\rho c}>0$ (thermal diffusivity for $k_0$). Through the following change of variables in problem (\ref{StefanProblem}):
\begin{equation}\label{DimensionlessT}
\varphi\left(\frac{x}{2\sqrt{\alpha_0t}}\right)=\frac{T(x,t)-T_\infty}{T_f-T_\infty}\quad 0<x<s(t),\,t>0,
\end{equation}
and a few simple computations, the following theorem can be proved (we refer the reader to the proof of Theorem 2 in \cite{Ta2004} for an illustrative example of this sort of demonstrations).

\begin{theorem}\label{Th:characterisation}
The Stefan problem (\ref{StefanProblem}) has the similarity solution $T$, $s$ given by:
\begin{subequations}\label{SimSol}
\begin{align}
\label{Sol:T}&T(x,t)=(T_f-T_\infty)\varphi\left(\frac{x}{2\sqrt{\alpha_0t}}\right)+T_\infty&0<x<s(t),\,t>0\\
\label{Sol:s}&s(t)=2\lambda\sqrt{\alpha_0t}&t>0,
\end{align}
\end{subequations}
if and only if the function $\varphi$ and the parameter $\lambda>0$ satisfy the following differential problem:
\begin{subequations}\label{VarphiProblem}
\begin{align}
\label{Eq:Varphi}&[(1+\beta y(\eta))y'(\eta)]'+2\eta y'(\eta)=0
&0<\eta<\lambda\\
\label{Cond:Varphi0}&y'(0)+\beta y(0) y'(0)-
\gamma y(0)=0&\\
\label{Cond:VarphiLambda}& y(\lambda)=1&
\end{align}
\end{subequations}
together with the following condition:
\begin{equation}\label{Eq:VarphiLambda}
\frac{\varphi'(\lambda)}{\lambda}=\frac{2}{(1+\beta)\text{Ste}_\infty},
\end{equation}
where $\text{Ste}_\infty=\dfrac{c(T_f-T_\infty)}{l}>0$ (Stefan number) and $\gamma=2\text{Bi}>0$ with $Bi=\dfrac{h_0\sqrt{\alpha_0}}{k_0}$ (generalized Biot number).
\end{theorem}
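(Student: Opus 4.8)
The plan is to establish the equivalence by substituting the proposed similarity form \eqref{SimSol} directly into each equation of \eqref{StefanProblem} and reducing, via the chain rule, every statement to an autonomous condition on $\varphi$ and $\lambda$; since each reduction amounts to cancelling a common nonzero factor, the resulting implications hold in both directions and the stated ``if and only if'' follows at once. First I would record the derivatives of the similarity variable $\eta=\frac{x}{2\sqrt{\alpha_0 t}}$, namely $\eta_x=\frac{1}{2\sqrt{\alpha_0 t}}$ and $\eta_t=-\frac{\eta}{2t}$, and rewrite the thermal conductivity \eqref{K} as $k(T)=k_0\,(1+\beta\varphi(\eta))$. Differentiating \eqref{Sol:T} then gives $T_t=-(T_f-T_\infty)\frac{\eta}{2t}\varphi'(\eta)$ and $T_x=\frac{T_f-T_\infty}{2\sqrt{\alpha_0 t}}\varphi'(\eta)$. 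Note also that the ansatz \eqref{Sol:s} satisfies $s(0)=0$ automatically, so \eqref{Cond:s0} requires no further analysis.

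Substituting these expressions into the heat equation \eqref{Eq:heat} and using $\alpha_0=\frac{k_0}{\rho c}$, every factor of $(T_f-T_\infty)$, $\rho c$ and $t$ cancels, leaving exactly $[(1+\beta\varphi)\varphi']'+2\eta\varphi'=0$, that is \eqref{Eq:Varphi} with $y=\varphi$. For the boundary data I would evaluate the similarity variable at the two endpoints: $x=s(t)$ corresponds to $\eta=\lambda$ and $x=0$ to $\eta=0$. Then \eqref{Cond:FreeBoundaryTemp} becomes $\varphi(\lambda)=1$, which is \eqref{Cond:VarphiLambda}. In the convective condition \eqref{Cond:Conv}, inserting $T(0,t)-T_\infty=(T_f-T_\infty)\varphi(0)$, $k(T(0,t))=k_0(1+\beta\varphi(0))$ and $T_x(0,t)$ and writing $\gamma=2\text{Bi}=\frac{2h_0\sqrt{\alpha_0}}{k_0}$ yields $\varphi'(0)+\beta\varphi(0)\varphi'(0)-\gamma\varphi(0)=0$, which is \eqref{Cond:Varphi0}. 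Finally, the Stefan condition \eqref{Cond:Stefan}, with $k(T_f)=k_0(1+\beta)$ and $\dot s(t)=\lambda\sqrt{\alpha_0}/\sqrt t$, collapses after the same cancellations to $\frac{\varphi'(\lambda)}{\lambda}=\frac{2}{(1+\beta)\text{Ste}_\infty}$, i.e. \eqref{Eq:VarphiLambda}.

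The computation is elementary, so the only real care needed is the constant-and-scaling bookkeeping: one must check that every equation of \eqref{StefanProblem} carries exactly the time dependence that the similarity substitution annihilates. This is the point where the specific form of the data matters --- in particular the choice of the time-dependent heat-transfer coefficient $\frac{h_0}{\sqrt t}$ in \eqref{Cond:Conv}, rather than a constant one, is precisely what forces the common $\frac{1}{\sqrt t}$ factor to cancel and makes the convective condition reduce to a statement free of $t$; a constant coefficient would destroy the similarity structure of the Robin condition. I expect this bookkeeping to be the main (though modest) obstacle. Once it is settled, collecting the reduced equations gives \eqref{VarphiProblem} together with \eqref{Eq:VarphiLambda}, and, since every step was the cancellation of a factor that is nonzero for $t>0$ and $T_f\neq T_\infty$, reversing the manipulations proves the converse, completing the equivalence.
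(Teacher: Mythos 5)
Your proposal is correct and is essentially the paper's own argument: the paper proves Theorem \ref{Th:characterisation} precisely by the change of variables (\ref{DimensionlessT}) followed by the same chain-rule substitutions and cancellations you describe (citing Theorem 2 of \cite{Ta2004} as a model computation). Your verification of each reduction --- including the observation that the $\frac{h_0}{\sqrt{t}}$ scaling of the heat-transfer coefficient is exactly what makes the Robin condition compatible with the similarity structure --- matches the intended proof, so nothing further is needed.
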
   

Any function $\varphi$ that satisfies problem (\ref{VarphiProblem}) will be referred to as {\em Generalized Modified Error} (GME) function. In the following Section we will prove that such functions exist.

\section{Similarity solutions.}\label{Sect:EyU}
In this Section we will analyse the existence and uniqueness of the similarity solution (\ref{SimSol}) to problem (\ref{StefanProblem}). By virtue of Theorem \ref{Th:characterisation}, it can be done through the analysis of problem (\ref{VarphiProblem})-(\ref{Eq:VarphiLambda}). First, we will study the differential problem (\ref{VarphiProblem}) by assuming that $\lambda$ is a positive given number. As we shall see shortly, it can be proved that problem (\ref{VarphiProblem}) has a unique non-negative analytic solution $\varphi$ when $\beta$ assumes small positive values. Then, we will analyse the relation between the found solution $\varphi$ and the parameter $\lambda$ through the study of the equation (\ref{Eq:VarphiLambda}).

\subsection{Analysis of problem (\ref{VarphiProblem}).}
This Section is devoted to the GME function. First, we will present a result on its existence and uniqueness. Then, it will be proved that the GME function is increasing and concave, as the classical error function is. Finally, explicit approximations are proposed for the GME function and several plots are presented for different values of the parameters involved in the physical problem (\ref{StefanProblem}).

\subsubsection{Existence and uniqueness of the GME function}\label{Sect:GME}
The ideas that will be developed in the following are based on the recent article \cite{CeSaTa2017}, where it was proved the existence of the ME function introduced in \cite{ChSu1974}. Through a fixed point 
strategy, we will prove the existence of the GME function, $\varphi$. 

All throughout this Section we will consider $\lambda>0$, $\gamma>0$ and $\beta\geq 0$. We will denote with $X$ to the set of all bounded analytic functions $h:[0,\lambda]\to\mathbb{R}$. It is well known that $X$ is a Banach space with the supremum norm $||\cdot||_\infty\,$, which is defined by:
\begin{equation}\label{NormSup}
||h||_\infty=\sup\left\{|h(x)|\,:\,0\leq x\leq \lambda\right\}\quad (h\in X).
\end{equation}  
The subset of $X$ given by all non-negative functions which are bounded by 1 will be referred to as $K$, that is:
\begin{equation*}
K=\left\{h\in X\,:\,0\leq h\text{ and }||h||_\infty\leq 1\right\}.
\end{equation*}
Note that $K$ is a non-empty closed subset in $(X,||\cdot{}||_\infty)$. Finally, for each $h\in K$ we will write $\Psi_h=1+\beta h$. 
\begin{remark}\label{Re:PsiBounds}
$1\leq\Psi_h\leq 1+\beta$ for any $h\in K$.
\end{remark}

The main idea in the analysis below is to study the nonlinear problem (\ref{VarphiProblem}) through the linear problem given by:
\begin{subequations}\label{VarphiProblemStar}
\begin{align}
\label{Eq:VarphiLin}&[\Psi_h(\eta) y'(\eta)]'+2\eta y'(\eta)=0& 0<\eta<\lambda\\
&\Psi_h(0)y'(0)-\gamma y(0)=0&\\
&y(\lambda)=1,&
\end{align}
\end{subequations}
where $h$ is a known function which belongs to $K$. The advantage in considering problem (\ref{VarphiProblemStar}) lies in the fact that the differential equation (\ref{Eq:VarphiLin}) can be easily solved as a linear equation of first order in $y'$.  

\begin{lemma}\label{Le:SolLinear}
Let $h\in K$. The only solution $y$ to problem (\ref{VarphiProblemStar}) is given by:
\begin{equation}\label{y}
y(\eta)=D_h\left(\frac{1}{\gamma}+\displaystyle\bigintsss_0^\eta
\frac{\exp\left(-2\displaystyle\int_0^x\frac{\xi}{\Psi_h(\xi)}d\xi\right)}{\Psi_h(x)}dx\right)\quad 0<\eta<\lambda,
\end{equation}
with $D_h$ defined by:
\begin{equation}\label{Dh}
D_h=\gamma\left(1+\gamma\displaystyle\bigintsss_0^\lambda
\frac{\exp\left(-2\displaystyle\int_0^x\frac{\xi}{\Psi_h(\xi)}d\xi\right)}{\Psi_h(x)}dx\right)^{-1}.
\end{equation}
Moreover, $y\in K$.
\end{lemma}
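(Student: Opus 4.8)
The plan is to exploit the fact that the differential equation \eqref{Eq:VarphiLin} is genuinely first order in the unknown $y'$. First I would set $u=y'$ and rewrite \eqref{Eq:VarphiLin} as the linear first-order ODE $[\Psi_h u]'+2\eta u=0$, that is, $\Psi_h u'+(\Psi_h'+2\eta)u=0$. Solving this by the integrating-factor method — which is legitimate because $\Psi_h\geq 1>0$ by Remark \ref{Re:PsiBounds}, so no division by zero ever occurs — yields
\[
y'(\eta)=\frac{A}{\Psi_h(\eta)}\exp\!\left(-2\int_0^\eta\frac{\xi}{\Psi_h(\xi)}\,d\xi\right)
\]
for an arbitrary constant $A$. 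Integrating once more from $0$ to $\eta$ introduces a second constant $y(0)$ and gives $y(\eta)=y(0)+A\int_0^\eta \Psi_h(x)^{-1}\exp(-2\int_0^x \xi/\Psi_h(\xi)\,d\xi)\,dx$.

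Next I would impose the two boundary conditions of \eqref{VarphiProblemStar} to determine $A$ and $y(0)$, which establishes existence and uniqueness simultaneously. Evaluating the formula at $\eta=0$ gives $y'(0)=A/\Psi_h(0)$, so the Robin condition $\Psi_h(0)y'(0)-\gamma y(0)=0$ forces $A=\gamma\,y(0)$, whence $y(0)=A/\gamma$ and $y(\eta)=A\bigl(\tfrac1\gamma+\int_0^\eta\cdots\bigr)$. The remaining condition $y(\lambda)=1$ then pins down $A$ uniquely as $A=\bigl(\tfrac1\gamma+I\bigr)^{-1}$, where $I:=\int_0^\lambda \Psi_h(x)^{-1}\exp(-2\int_0^x \xi/\Psi_h(\xi)\,d\xi)\,dx$; a one-line rearrangement $A=\gamma(1+\gamma I)^{-1}$ shows $A=D_h$, reproducing exactly \eqref{y}--\eqref{Dh}. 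Since every step was forced, this is the only solution, so both the explicit form and uniqueness follow.

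Finally I would verify that $y\in K$. For analyticity, since $\Psi_h=1+\beta h$ is analytic and, by Remark \ref{Re:PsiBounds}, bounded away from $0$, the inner integral, its exponential, the quotient by $\Psi_h$, and the outer integral are each analytic on $[0,\lambda]$, so $y$ is analytic; being continuous on the compact interval $[0,\lambda]$, it is bounded, hence $y\in X$. For the two-sided bound, I would note that $D_h>0$ (because $\gamma>0$ and $I>0$) and that the integrand in \eqref{y} is strictly positive, so $y(\eta)>0$; moreover $y'(\eta)=D_h\,\Psi_h(\eta)^{-1}\exp(-2\int_0^\eta \xi/\Psi_h(\xi)\,d\xi)>0$, so $y$ is strictly increasing and therefore $0<y(\eta)\le y(\lambda)=1$ throughout $[0,\lambda]$. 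This gives $\|y\|_\infty\le 1$ and $y\ge 0$, i.e. $y\in K$.

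Because the linearized equation is exactly solvable, existence and uniqueness reduce to careful bookkeeping rather than any deep estimate. The only places where genuine care is needed are the twofold use of Remark \ref{Re:PsiBounds} — once to justify the integrating-factor computation (no vanishing denominator) and once to secure analyticity of $1/\Psi_h$ — and the derivation of the upper bound $y\le 1$, which I expect to be cleanest via the monotonicity argument above rather than through a direct estimate of the integral.
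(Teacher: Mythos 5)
Your proposal is correct and follows essentially the same route as the paper: the paper likewise treats (\ref{Eq:VarphiLin}) as a first-order linear equation in $y'$ (it says so explicitly before the lemma), and its written proof amounts to checking that (\ref{y})--(\ref{Dh}) satisfies (\ref{VarphiProblemStar}), with $D_h$ well defined because $\Psi_h$ never vanishes. Your integrating-factor derivation is in fact slightly more complete, since the forced determination of the two constants also justifies the uniqueness claim that the paper's verification-style proof leaves implicit.
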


\begin{proof}
By observing that the constant $D_h$ given by (\ref{Dh}) is well defined since $\Psi_h$ is never zero, the proof follows easily by checking that the function $y$ given by (\ref{y}) satisfies problem (\ref{VarphiProblemStar}).
\end{proof}

The next result is a direct consequence of the previous Lemma.

\begin{theorem}\label{Th:FixedPoint}
Let $y\in K$. Then $y$ is a solution to problem (\ref{VarphiProblem}) if and only if $y$ is a fixed point of the operator $\tau$ from $K$ to $X$ given by:
\begin{equation}\label{tau}
\left(\tau h\right)(\eta)=D_h\left(\frac{1}{\gamma}+\displaystyle\bigintsss_0^\eta
\frac{\exp\left(-2\displaystyle\int_0^x\frac{\xi}{\Psi_h(\xi)}d\xi\right)}{\Psi_h(x)}dx\right)\quad 0<\eta<\lambda,\quad (h\in K)
\end{equation}
where $D_h$ is defined by (\ref{Dh}).
\end{theorem}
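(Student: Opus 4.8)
The plan is to exploit the fact that the nonlinear problem (\ref{VarphiProblem}) is nothing but the linear problem (\ref{VarphiProblemStar}) in the self-referential case $h=y$. Since the formula (\ref{tau}) defining $\tau h$ coincides verbatim with the solution formula (\ref{y}) of Lemma \ref{Le:SolLinear}, the operator $\tau$ assigns to each $h\in K$ the unique solution of the linear problem (\ref{VarphiProblemStar}) associated with that frozen coefficient $h$. The whole argument then reduces to matching the two problems when $h$ is replaced by the unknown $y$ itself, so no new analysis is needed beyond Lemma \ref{Le:SolLinear}.

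First I would record the elementary observation that substituting $h=y$ into (\ref{VarphiProblemStar}) --- that is, replacing $\Psi_h$ by $\Psi_y=1+\beta y$ --- reproduces exactly problem (\ref{VarphiProblem}). Indeed, the equation $[\Psi_y(\eta)y'(\eta)]'+2\eta y'(\eta)=0$ becomes (\ref{Eq:Varphi}); the boundary condition $\Psi_y(0)y'(0)-\gamma y(0)=0$ expands to $y'(0)+\beta y(0)y'(0)-\gamma y(0)=0$, which is (\ref{Cond:Varphi0}); and the endpoint condition $y(\lambda)=1$ is (\ref{Cond:VarphiLambda}). This identification is the heart of the statement, and because $y\in K\subset X$ is analytic, all the derivatives appearing in these conditions are well defined, so ``solution of (\ref{VarphiProblem})'' is meant in the classical sense.

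For the forward implication I would assume $y\in K$ solves (\ref{VarphiProblem}). By the observation above, $y$ then solves the linear problem (\ref{VarphiProblemStar}) with $h=y$. Lemma \ref{Le:SolLinear} guarantees that this linear problem has a unique solution, given explicitly by (\ref{y}) with $h=y$, which is precisely $(\tau y)(\eta)$. Hence $y=\tau y$, so $y$ is a fixed point of $\tau$. For the converse I would assume $y\in K$ satisfies $y=\tau y$. Since $\tau y$ is, by Lemma \ref{Le:SolLinear}, the solution of (\ref{VarphiProblemStar}) with frozen coefficient $h=y$, the equality $y=\tau y$ says that $y$ itself solves that linear problem; by the identification above this linear problem with $h=y$ is exactly (\ref{VarphiProblem}), so $y$ solves (\ref{VarphiProblem}).

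A remark on obstacles: there is essentially none of substance, which is why the paper labels this a direct consequence of Lemma \ref{Le:SolLinear}. The only points requiring a little care are purely bookkeeping --- verifying that the algebraic expansion of the boundary condition under $\Psi_y=1+\beta y$ matches (\ref{Cond:Varphi0}) precisely, and noting that membership in $K$ furnishes enough regularity for $y$ to be a classical solution, so that ``solving the differential problem'' and ``being a fixed point of $\tau$'' refer to the same object in $X$.
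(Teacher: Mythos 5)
Your proposal is correct and follows exactly the route the paper intends: the paper states this theorem as a direct consequence of Lemma \ref{Le:SolLinear}, and your argument---identifying problem (\ref{VarphiProblem}) with the frozen-coefficient problem (\ref{VarphiProblemStar}) at $h=y$, then using the uniqueness in Lemma \ref{Le:SolLinear} for the forward implication and the solution formula (\ref{y}) for the converse---is precisely the spelled-out version of that consequence. No gaps; both directions are handled correctly, including the use of uniqueness where it is actually needed.
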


In the following we will focus on analysing the existence of fixed points of $\tau$. 

\begin{theorem}\label{Th:tauK}
$\tau(K)\subset K$.
\end{theorem}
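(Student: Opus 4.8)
The plan is to verify directly the three defining properties of $K$ for $\tau h$ whenever $h\in K$: analyticity and boundedness (so that $\tau h\in X$), non-negativity, and the bound $\|\tau h\|_\infty\leq 1$. Throughout I would lean on Remark \ref{Re:PsiBounds}, which gives $1\leq\Psi_h\leq 1+\beta$, so that $\Psi_h$ is bounded away from zero on $[0,\lambda]$; this is precisely what makes every integrand appearing in (\ref{tau}) and (\ref{Dh}) well defined and keeps $D_h>0$.

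First I would record that the integrand
$$\frac{\exp\left(-2\int_0^x\frac{\xi}{\Psi_h(\xi)}d\xi\right)}{\Psi_h(x)}$$
is strictly positive on $[0,\lambda]$. Since $h$ is analytic and $\Psi_h\geq 1$, the map $\xi\mapsto\xi/\Psi_h(\xi)$ is real-analytic, its primitive is real-analytic, the exponential of a real-analytic function is real-analytic, and dividing by the non-vanishing analytic function $\Psi_h$ preserves analyticity. Hence $\tau h$, being $D_h$ times a constant plus the integral of a real-analytic function, is itself real-analytic and bounded on $[0,\lambda]$, so $\tau h\in X$. Positivity of the integrand together with $D_h>0$ immediately yields $(\tau h)(\eta)\geq D_h/\gamma>0$, which gives non-negativity.

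For the upper bound, the decisive observation is that $\tau h$ is increasing (its derivative equals $D_h$ times the positive integrand), so its supremum is attained at $\eta=\lambda$. Writing $I=\int_0^\lambda\frac{\exp(\cdots)}{\Psi_h(x)}dx$, so that $D_h=\gamma(1+\gamma I)^{-1}$, a short algebraic cancellation gives
$$(\tau h)(\lambda)=\frac{\gamma}{1+\gamma I}\left(\frac{1}{\gamma}+I\right)=1.$$
Since $\tau h$ is positive and increasing, $\|\tau h\|_\infty=(\tau h)(\lambda)=1\leq 1$, completing the verification that $\tau h\in K$.

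I expect no serious obstacle here: the statement reduces to elementary monotonicity together with the telescoping identity $(\tau h)(\lambda)=1$. The only point demanding a little care is the analyticity claim, which relies on the stability of real-analyticity under integration, composition, and quotients by non-vanishing analytic functions; this is exactly where the uniform lower bound $\Psi_h\geq 1$ from Remark \ref{Re:PsiBounds} is essential, since it rules out singularities and guarantees the convergence of all the integrals involved.
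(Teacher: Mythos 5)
Your proof is correct and follows essentially the same route as the paper's: verify analyticity, non-negativity via $D_h>0$, and the bound $\|\tau h\|_\infty\leq 1$ via monotonicity of $\tau h$ together with the cancellation $(\tau h)(\lambda)=1$. The paper states these three points tersely; you have simply supplied the details (positivity of the integrand, stability of analyticity under integration and quotients, the explicit telescoping identity) that the paper leaves implicit.
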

\begin{proof}
Let $h\in K$. We have that:
\begin{enumerate}
\item[i)] $\tau h$ is an analytic function, since $h\in X$. 
\item[ii)] $0\leq \tau h$, since $0<D_h$.
\item[iii)] $||\tau h||_\infty\leq 1$, since $\left|\left(\tau h\right)(\eta)\right|\leq \left(\tau h\right)(\lambda)=1$ for all $0<\eta<\lambda$.
\end{enumerate} 
Then, $\tau h\in K$.
\end{proof}

\begin{lemma}\label{Le:Cotas}
Let $h, h_1, h_2\in K$ and $\eta\in[0,\lambda]$. We have:
\begin{enumerate}
\item[a)] $\displaystyle\bigintsss_0^\eta\left|
\frac{\exp\left(-2\displaystyle\int_0^x\frac{\xi}{\Psi_{h_1}(\xi)}d\xi\right)}{\Psi_{h_1}(x)}-
\frac{\exp\left(-2\displaystyle\int_0^x\frac{\xi}{\Psi_{h_2}(\xi)}d\xi\right)}{\Psi_{h_2}(x)}\right|dx
$\\[0.25cm]$\leq\frac{\sqrt{\pi}}{4} \beta(1+\beta)^{1/2}(3+\beta)||h_1-h_2||_{\infty}$
\item[b)] $0<D_h\leq \gamma$. 
\end{enumerate}
\end{lemma}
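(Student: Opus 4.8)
The plan is to prove the two parts separately, starting with part (b) since it is immediate and then tackling part (a), which is the main work.

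The plan is to handle the two parts separately, starting with (b), which is immediate. Since $\Psi_h\geq 1>0$ by Remark \ref{Re:PsiBounds} and the exponential factor is positive, the integral appearing in (\ref{Dh}) is non-negative; hence the factor in parentheses is $\geq 1$, and $0<D_h\leq\gamma$ follows at once from the definition of $D_h$.

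For (a), the core idea is to estimate the integrand pointwise and then integrate a Gaussian majorant. I would abbreviate $E_h(x)=\exp\left(-2\int_0^x \xi/\Psi_h(\xi)\,d\xi\right)$, so that the integrand under study is $|f_{h_1}-f_{h_2}|$ with $f_h=E_h/\Psi_h$. The first move is the standard decomposition $f_{h_1}-f_{h_2}=(E_{h_1}-E_{h_2})/\Psi_{h_1}+E_{h_2}\left(1/\Psi_{h_1}-1/\Psi_{h_2}\right)$, which separates the contribution of the exponentials from that of the $\Psi$'s.

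I then bound the two pieces. For the $\Psi$-piece I use $\Psi_{h_1}-\Psi_{h_2}=\beta(h_1-h_2)$ together with $\Psi_{h_1}\Psi_{h_2}\geq 1$, giving a bound $\beta\|h_1-h_2\|_\infty E_{h_2}(x)$. For the exponential-piece I apply the mean value theorem to $t\mapsto e^{-t}$: writing $g_h(x)=2\int_0^x \xi/\Psi_h(\xi)\,d\xi$, the estimate $|g_{h_1}(x)-g_{h_2}(x)|\leq\beta\|h_1-h_2\|_\infty x^2$ (again from $\Psi_{h_1}-\Psi_{h_2}=\beta(h_1-h_2)$ and $\Psi_{h_1}\Psi_{h_2}\geq 1$) yields $|E_{h_1}-E_{h_2}|\leq\beta\|h_1-h_2\|_\infty x^2 e^{-x^2/(1+\beta)}$, where the Gaussian factor comes from the lower bound $g_h(x)\geq x^2/(1+\beta)$, itself a consequence of the upper bound $\Psi_h\leq 1+\beta$ in Remark \ref{Re:PsiBounds}. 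Combining the two pieces, and using $\Psi_{h_1}\geq 1$ and $E_{h_2}(x)\leq e^{-x^2/(1+\beta)}$, gives the pointwise bound $|f_{h_1}(x)-f_{h_2}(x)|\leq\beta\|h_1-h_2\|_\infty(1+x^2)e^{-x^2/(1+\beta)}$.

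Finally I integrate over $[0,\eta]\subseteq[0,\infty)$ and evaluate the elementary Gaussian moments $\int_0^\infty e^{-ax^2}\,dx=\tfrac12\sqrt{\pi/a}$ and $\int_0^\infty x^2 e^{-ax^2}\,dx=\tfrac{1}{4a}\sqrt{\pi/a}$ with $a=1/(1+\beta)$; these combine to $\tfrac{\sqrt\pi}{4}(1+\beta)^{1/2}(3+\beta)$, producing exactly the claimed constant. I expect the main obstacle to be bookkeeping rather than conceptual: the constant is sharp enough that one must use the lower bound $\Psi_h\geq 1$ and the upper bound $\Psi_h\leq 1+\beta$ in precisely the right places — the lower bound to control the denominators and the difference of reciprocals, the upper bound to produce the decay $e^{-x^2/(1+\beta)}$ — so that the two moment integrals assemble into $(3+\beta)/4$ after factoring out $\sqrt{\pi(1+\beta)}$.
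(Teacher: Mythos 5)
Your proposal is correct in both parts, and part (b) is exactly the paper's one-line argument: positivity of $\Psi_h$ makes the integral in (\ref{Dh}) non-negative, so the parenthesized factor is at least $1$ and $0<D_h\leq\gamma$ follows.

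For part (a) the comparison is of a different nature: the paper gives no proof at all, but simply cites Lemma 2.1 of \cite{CeSaTa2017}, where this estimate was established in the study of the modified error function. What you have done is reconstruct that outsourced argument in a self-contained way, and every step checks out. The decomposition of $f_{h_1}-f_{h_2}$ into $(E_{h_1}-E_{h_2})/\Psi_{h_1}$ plus $E_{h_2}\left(1/\Psi_{h_1}-1/\Psi_{h_2}\right)$ is sound; the reciprocal difference is bounded by $\beta\|h_1-h_2\|_\infty$ using $\Psi_{h_1}\Psi_{h_2}\geq 1$; the mean value theorem applied to $t\mapsto e^{-t}$, together with $|g_{h_1}(x)-g_{h_2}(x)|\leq\beta\|h_1-h_2\|_\infty x^2$ and the lower bound $g_h(x)\geq x^2/(1+\beta)$ (which follows from $\Psi_h\leq 1+\beta$, Remark \ref{Re:PsiBounds}, and controls the intermediate point of the MVT since both $g_{h_1}(x)$ and $g_{h_2}(x)$ exceed $x^2/(1+\beta)$), yields the exponential-difference bound; and the Gaussian moments with $a=1/(1+\beta)$ combine as $\frac12+\frac{1+\beta}{4}=\frac{3+\beta}{4}$, recovering the exact constant $\frac{\sqrt{\pi}}{4}\beta(1+\beta)^{1/2}(3+\beta)$ of the statement. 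The fact that the sharp constant emerges with no slack is good evidence the two-sided bounds on $\Psi_h$ are used exactly where they must be. In short: your proof is complete and, unlike the paper's, does not defer to an external reference — which is a genuine improvement in self-containedness.
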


\begin{proof}
\begin{enumerate}
\item[]
\item[a)] See \cite[Lemma 2.1]{CeSaTa2017}.
\item[b)] It is a direct consequence of the positivity of $\Psi_h$  (see Remark \ref{Re:PsiBounds}).
\end{enumerate}
\end{proof}

\begin{lemma}\label{Le:Eq}
Let $g$ be the real function defined by:
\begin{equation}\label{g}
g(x)=\frac{\sqrt{\pi}}{2}\gamma x(1+x)^{1/2}(3+x)\quad x>0.
\end{equation}
The equation:
\begin{equation}\label{eq:delta1}
g(x)=1 \quad x>0
\end{equation} 
has an only positive solution $\beta_1=\beta_1(\gamma)$. 
\end{lemma}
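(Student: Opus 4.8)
The plan is to prove the claim by a standard monotonicity-plus-intermediate-value argument, since $g$ is a very well-behaved function on the positive half-line. First I would note that $g$ extends continuously to $[0,+\infty)$ with $g(0)=0$, being a finite product of the continuous factors $x$, $(1+x)^{1/2}$ and $(3+x)$ together with the positive constant $\frac{\sqrt{\pi}}{2}\gamma$.

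Next I would establish that $g$ is strictly increasing on $(0,+\infty)$. The cleanest route avoids differentiation altogether: each of the three factors $x\mapsto x$, $x\mapsto(1+x)^{1/2}$ and $x\mapsto(3+x)$ is strictly positive and strictly increasing on $(0,+\infty)$, and a product of strictly positive strictly increasing functions is again strictly increasing; multiplying by the positive constant $\frac{\sqrt{\pi}}{2}\gamma$ preserves this. Alternatively one could compute $g'(x)$ explicitly and check that it is a sum of strictly positive terms for $x>0$, but this is more laborious and unnecessary.

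Finally, to conclude existence and uniqueness of the solution to $g(x)=1$: since $g(0)=0<1$ and $g(x)\to+\infty$ as $x\to+\infty$ (the leading behaviour is of order $x^{5/2}$), continuity and the intermediate value theorem yield at least one $\beta_1\in(0,+\infty)$ with $g(\beta_1)=1$, while strict monotonicity forces this point to be unique. Setting $\beta_1=\beta_1(\gamma)$ completes the argument; the dependence on $\gamma$ is implicit, since $\beta_1(\gamma)$ is equivalently the unique root of $\frac{\sqrt{\pi}}{2}x(1+x)^{1/2}(3+x)=1/\gamma$.

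There is no genuine obstacle here: the only point requiring a moment of care is the verification of strict monotonicity, and even that reduces to the elementary observation that a product of positive increasing functions is increasing. I therefore expect the entire proof to collapse into one short paragraph in the final write-up.
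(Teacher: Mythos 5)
Your proof is correct and follows essentially the same route as the paper: both rest on $g$ being increasing on $\mathbb{R}^+$ with $\lim_{x\to 0^+}g(x)=0$ and $\lim_{x\to+\infty}g(x)=+\infty$, combined with continuity to get existence and monotonicity to get uniqueness. Your write-up merely spells out the details (product of positive increasing factors, intermediate value theorem) that the paper's one-line proof leaves implicit.
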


\begin{proof}
It follows from the fact that $g$ is an increasing function in $\mathbb{R}^+$ with $\displaystyle\lim_{x\to 0^+}g(x)=0$ and $\displaystyle\lim_{x\to +\infty}g(x)=+\infty$.
\end{proof}

\begin{theorem}\label{Th:Contraction}
Let $\beta_1$ the only positive solution to equation (\ref{eq:delta1}). If $0\leq\beta<\beta_1$, then $\tau$ is a contraction.
\end{theorem}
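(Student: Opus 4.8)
The plan is to estimate the Lipschitz constant of $\tau$ directly and show that it coincides with the function $g$ of (\ref{g}), so that the contraction property reduces to the inequality $g(\beta)<1$. First I would rewrite the operator in a more convenient closed form. Writing
\begin{equation*}
J_h(\eta)=\displaystyle\int_0^\eta \frac{\exp\left(-2\int_0^x \frac{\xi}{\Psi_h(\xi)}\,d\xi\right)}{\Psi_h(x)}\,dx\qquad(0\le\eta\le\lambda)
\end{equation*}
and substituting the expression (\ref{Dh}) for $D_h$ into (\ref{tau}), the factors of $1/\gamma$ combine and one obtains the single quotient
\begin{equation*}
(\tau h)(\eta)=\frac{1+\gamma J_h(\eta)}{1+\gamma J_h(\lambda)}\qquad(0\le\eta\le\lambda).
\end{equation*}
This is the natural object to work with: numerator and denominator share the same integrand, and since that integrand is positive (because $\Psi_h\ge1$), the function $J_h$ is nondecreasing, so $J_h(\eta)\le J_h(\lambda)$ for every $\eta$.

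Next I would fix $h_1,h_2\in K$ and subtract the two quotients over a common denominator. Setting $a=J_{h_1}(\eta)$, $A=J_{h_1}(\lambda)$, $b=J_{h_2}(\eta)$, $B=J_{h_2}(\lambda)$, a short computation rearranges the numerator of $(\tau h_1)(\eta)-(\tau h_2)(\eta)$ into $\gamma(B-A)(1+\gamma a)+\gamma(a-b)(1+\gamma A)$, which gives
\begin{equation*}
(\tau h_1)(\eta)-(\tau h_2)(\eta)=\frac{\gamma(B-A)(1+\gamma a)}{(1+\gamma A)(1+\gamma B)}+\frac{\gamma(a-b)}{1+\gamma B}.
\end{equation*}
Using $a\le A$ yields $\frac{1+\gamma a}{1+\gamma A}\le1$, and discarding the remaining denominators $1+\gamma B\ge1$ produces the clean bound $\bigl|(\tau h_1)(\eta)-(\tau h_2)(\eta)\bigr|\le\gamma|A-B|+\gamma|a-b|$.

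It then remains to estimate $|a-b|=|J_{h_1}(\eta)-J_{h_2}(\eta)|$ and $|A-B|=|J_{h_1}(\lambda)-J_{h_2}(\lambda)|$. Each is the modulus of the integral of the difference of the two integrands, over $[0,\eta]$ and $[0,\lambda]$ respectively, so Lemma \ref{Le:Cotas}(a) bounds both by $\frac{\sqrt{\pi}}{4}\beta(1+\beta)^{1/2}(3+\beta)\,\|h_1-h_2\|_\infty$. Adding the two contributions and taking the supremum over $\eta$ gives
\begin{equation*}
\|\tau h_1-\tau h_2\|_\infty\le 2\gamma\cdot\tfrac{\sqrt{\pi}}{4}\beta(1+\beta)^{1/2}(3+\beta)\,\|h_1-h_2\|_\infty=g(\beta)\,\|h_1-h_2\|_\infty,
\end{equation*}
with $g$ exactly as in (\ref{g}). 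Since $g$ is increasing on $\mathbb{R}^+$ and $g(\beta_1)=1$ by Lemma \ref{Le:Eq}, the hypothesis $0\le\beta<\beta_1$ forces $g(\beta)<1$, and hence $\tau$ is a contraction.

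Finally, I would note where the difficulty lies. The genuinely delicate part—quantifying how the double-exponential integrand depends on $h$—has already been isolated in Lemma \ref{Le:Cotas}(a); granting that, the only real work here is the algebraic rearrangement of the quotient difference so that the factor $(1+\gamma a)/(1+\gamma A)$ can be bounded by $1$ and each of the two terms picks up a single factor of $\gamma$. The one thing that must come out precisely is that the resulting constant $2\gamma\cdot\frac{\sqrt{\pi}}{4}\beta(1+\beta)^{1/2}(3+\beta)$ equals $g(\beta)$ on the nose, since that identity is what ties the contraction threshold to the root $\beta_1$ of (\ref{eq:delta1}).
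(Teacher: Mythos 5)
Your proof is correct and follows essentially the same route as the paper: writing $(\tau h)(\eta)=(1+\gamma J_h(\eta))/(1+\gamma J_h(\lambda))$ is just a repackaging of the paper's splitting of $(\tau h_1)(\eta)-(\tau h_2)(\eta)$ into a term carrying $J_{h_1}(\eta)-J_{h_2}(\eta)$ and a term carrying $D_{h_1}-D_{h_2}$, each picking up one factor of $\gamma$ and each bounded via Lemma \ref{Le:Cotas}(a), which yields exactly the constant $g(\beta)$ and the conclusion via monotonicity of $g$ and $g(\beta_1)=1$. If anything, your separate treatment of $|a-b|$ (integral over $[0,\eta]$) and $|A-B|$ (integral over $[0,\lambda]$) is slightly more careful than the printed argument, which folds both contributions into a single integral over $[0,\eta]$; this is harmless because the bound in Lemma \ref{Le:Cotas}(a) is uniform in $\eta$, but your version makes that step explicit.
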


\begin{proof}
Let $h_1, h_2\in K$ and $\eta\in[0,\lambda]$. From Lemma \ref{Le:Cotas} and:
\begin{equation*}
\begin{split}
\left|(\tau h_1)(\eta)-(\tau h_2)(\eta)\right|&\leq 
\gamma\displaystyle\bigintsss_0^\eta\left|
\frac{\exp\left(-2\displaystyle\int_0^x\frac{\xi}{\Psi_{h_1}(\xi)}d\xi\right)}{\Psi_{h_1}(x)}-
\frac{\exp\left(-2\displaystyle\int_0^x\frac{\xi}{\Psi_{h_2}(\xi)}d\xi\right)}{\Psi_{h_2}(x)}\right|dx\\
&+\left|D_{h_1}-D_{h_2}\right|\left|\frac{1}{\gamma} +\bigintsss_0^\eta
\frac{\exp\left(-2\displaystyle\int_0^x\frac{\xi}{\Psi_{h_2}(\xi)}d\xi\right)}{\Psi_{h_2}(x)} dx \; \right| \\
& \leq 2 \gamma\displaystyle\bigintsss_0^\eta\left|
\frac{\exp\left(-2\displaystyle\int_0^x\frac{\xi}{\Psi_{h_1}(\xi)}d\xi\right)}{\Psi_{h_1}(x)}-
\frac{\exp\left(-2\displaystyle\int_0^x\frac{\xi}{\Psi_{h_2}(\xi)}d\xi\right)}{\Psi_{h_2}(x)}\right|dx,
\end{split}
\end{equation*}
we have that that $||\tau h_1-\tau h_2||_\infty\leq g(\beta)||h_1-h_2||_\infty$. Recalling that $g$ is an increasing function, it follows that $\tau$ is a contraction when $\beta<\beta_1$.
\end{proof}

We are in a position now to formulate our main result.

\begin{theorem}\label{Th:EyU-GME}
Let $\beta_1$ as in Theorem \ref{Th:Contraction}. If $0\leq\beta<\beta_1$ then problem (\ref{VarphiProblem}) has a unique non-negative analytic solution.
\end{theorem}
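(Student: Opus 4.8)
The plan is to close the argument by a direct application of the Banach fixed point theorem to the operator $\tau$ on the closed set $K$, since all the substantive work has already been carried out in the preceding results. The first step is simply to record the structural facts: $K$ is a non-empty closed subset of the complete normed space $(X,\|\cdot\|_\infty)$, and hence $K$ is itself a complete metric space under the metric induced by the supremum norm. This is the ambient space on which the fixed-point principle will operate.

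Second, I would assemble the two key inclusions. Theorem \ref{Th:tauK} guarantees $\tau(K)\subset K$, so that $\tau$ may be regarded as a self-map $\tau\colon K\to K$; and Theorem \ref{Th:Contraction}, invoked under the standing hypothesis $0\le\beta<\beta_1$, ensures that this self-map is a contraction, with contraction constant $g(\beta)<1$. The contraction mapping principle then applies verbatim and yields a unique fixed point $\varphi\in K$, that is, a unique function on $[0,\lambda]$ that is non-negative, bounded by $1$, and analytic, and that satisfies $\tau\varphi=\varphi$.

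Third, I would translate this fixed-point statement back into a statement about the differential problem by means of Theorem \ref{Th:FixedPoint}, which asserts that, for $y\in K$, the function $y$ solves problem (\ref{VarphiProblem}) if and only if $y$ is a fixed point of $\tau$. Combining this equivalence with the uniqueness of the fixed point furnished in the previous step gives that $\varphi$ is a solution of (\ref{VarphiProblem}) and is the only solution lying in $K$, which establishes existence and uniqueness of a non-negative analytic solution.

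There is no genuine obstacle here, as the difficulty has been front-loaded into Lemma \ref{Le:Cotas} and Theorem \ref{Th:Contraction}; the only point deserving care is the precise scope of the uniqueness claim. The Banach fixed point theorem delivers uniqueness only among elements of $K$, and Theorem \ref{Th:FixedPoint} characterises solutions only within $K$, so what is rigorously obtained is a unique solution in the class of non-negative analytic functions bounded by $1$. I would state this explicitly, remarking that any GME function is in any case expected to be increasing with terminal value $\varphi(\lambda)=1$ and therefore to belong to $K$, so that restricting attention to $K$ entails no loss of generality for the physical problem (\ref{StefanProblem}).
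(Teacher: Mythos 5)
Your proposal is correct and follows exactly the paper's argument: the paper proves Theorem \ref{Th:EyU-GME} precisely by combining Theorems \ref{Th:FixedPoint}, \ref{Th:tauK}, \ref{Th:Contraction} with the Banach Fixed Point Theorem. Your closing remark that uniqueness is rigorously obtained only within the class $K$ (non-negative analytic functions bounded by $1$) is a careful reading of the scope of the statement that the paper leaves implicit, but it does not change the argument.
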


\begin{proof}
It a direct consequence of Theorems \ref{Th:FixedPoint}, \ref{Th:tauK}, \ref{Th:Contraction} and the Banach Fixed Point Theorem. 
\end{proof}

\subsubsection{Some properties of the GME function}\label{Sect:GMEProp}
This Section is devoted to prove that the GME function $\varphi$ found in Section \ref{Sect:GME} shares the following properties with the classical error function erf:
\begin{equation}\label{GMEProp}
\text{i) }0\leq \varphi(\eta)\leq 1,
\hspace{2cm}
\text{ii) }0<\varphi'(\eta),
\hspace{2cm}
\text{iii)}\varphi''(\eta)<0\hspace{1cm}\forall\,0<\eta<\lambda.
\end{equation}

In the following we will consider $\gamma>0$, $\lambda>0$, $\beta_1$ the solution to equation (\ref{eq:delta1}) and $0\leq\beta<\beta_1$. The first property in (\ref{GMEProp}) is an immediate consequence of the fact that $\varphi\in K$. In order to prove ii) in (\ref{GMEProp}), it will be enough to show that $\varphi'(\eta)\neq 0$ for all $0<\eta<\lambda$, since $\varphi(0)\leq 1$ and $\varphi(\lambda)=1$. Let us assume that there exists $0<\eta_0<\lambda$ such that $\varphi'(\eta_0)=0$. As we shall see shortly, we will reach a contradiction. Since equation (\ref{Eq:Varphi}) can be written as:
\begin{equation}\label{Eq:Varphi-bis}
(1+\beta y(\eta))y''(\eta)+\beta(y'(\eta))^2+2\eta y'(\eta)=0\quad 0<\eta<\lambda,
\end{equation}
and we have that:
\begin{equation}\label{IneqVarphi}
1+\beta \varphi(\eta_0)>0,
\end{equation}
we obtain that $\varphi''(\eta_0)=0$. From this, by differentiating (\ref{Eq:Varphi-bis}) and taking into account (\ref{IneqVarphi}), it follows that $\varphi'''(\eta_0)=0$. We continue in this fashion obtaining that $\varphi^{(n)}(\eta_0)=0$ for all $n\in\mathbb{N}$. But this implies that $\varphi\equiv 0$ in $[0,\lambda]$, since $\varphi$ is an analytic function. This contradicts $\varphi(\lambda)=1$. Finally, we have that the last property in (\ref{GMEProp}) is a direct consequence of i), ii) and the fact that $\varphi''$ is given by (see (\ref{Eq:Varphi-bis}) and (\ref{IneqVarphi})):
\begin{equation*}
\varphi''(\eta)=-\frac{\beta(\varphi'(\eta))^2+2\eta\varphi'(\eta)}{1+\beta\varphi(\eta)}\quad 0<\eta<\lambda.
\end{equation*}


\subsubsection{Approximation of the GME function.}\label{Sect:GMEApprox}
The following is devoted to obtain explicit approximations for the GME function $\varphi$ found in Section \ref{Sect:GME}. 

Let $\lambda>0$, $\beta>0$, $\gamma>0$ be given. Based on the assumption that problem (\ref{VarphiProblem}) has a solution $\varphi$ that can be represented as:
\begin{equation}\label{VarphiSeries}
\varphi(\eta)=\displaystyle\sum_{n=0}^\infty\beta^n\varphi_n(\eta)\quad 0<\eta<\lambda,
\end{equation}
where $\varphi_n$ are real functions that must be determined, we will propose approximations $\varphi^{(N)}$ of the GME function given as:
\begin{equation}\label{VarphiSeries}
\varphi^{(N)}(\eta)=\displaystyle\sum_{n=0}^N\beta^n\varphi_n(\eta)\quad 0<\eta<\lambda,
\end{equation}
with $N\in\mathbb{N}_0$.

If $\varphi$ is given by (\ref{VarphiSeries}), equation (\ref{Eq:Varphi}) is formally equivalent to:
\begin{equation}
\displaystyle\sum_{n=1}^\infty\left(\displaystyle\sum_{k=1}^n a(\eta,k-1,n-k)+b(\eta,n)\right)\beta^n+b(\eta,0)=0\quad 0<\eta<\lambda,
\end{equation}
where:
\begin{subequations}
\begin{align*}
&a(\eta,n,m)=\varphi'_n(\eta)\varphi'_m(\eta)+
\varphi_n(\eta)\varphi''_m(\eta)&0\leq\eta\leq\lambda,\,n,\,m\in\mathbb{N}_0\\
&b(\eta,n)=\varphi''_n(\eta)+2\eta\varphi_n'(\eta)&0\leq\eta\leq\lambda,\,n\in\mathbb{N}_0
\end{align*}
\end{subequations}

Similarly, we have that (\ref{Cond:Varphi0}) is formally equivalent to:
\begin{equation}
\displaystyle\sum_{n=1}^\infty\left(\displaystyle\sum_{k=1}^na_0(k-1,n-k)+b_0(n)\right)+b_0(0)=0,
\end{equation}
where:
\begin{subequations}
\begin{align*}
&a_0(n,m)=\varphi_n'(0)\varphi_m(0)&n,\,m\in\mathbb{N}_0\\
&b_0(n)=\varphi_n'(0)-\gamma\varphi_n(0)&n\in\mathbb{N}_0
\end{align*}
\end{subequations}

Therefore, if the functions $\varphi_n$ are such that:
\begin{subequations}\label{VarphinPreliminar}
\begin{align}
&\displaystyle\sum_{k=1}^n a(\eta,k-1,n-k)+b(\eta,n)=0,\hspace{1cm} b(\eta,0)=0&0<\eta<\lambda,\,n\in\mathbb{N}\\
&\displaystyle\sum_{k=1}^na_0(k-1,n-k)+b_0(n)=0,\hspace{1.45cm} b_0(0)=0&n\in\mathbb{N}\\
&\varphi_0(\lambda)=1&\\
&\varphi_n(\lambda)=0&n\in\mathbb{N}
\end{align}
\end{subequations}
then the function $\varphi$ given by (\ref{VarphiSeries}) is a formal solution to (\ref{VarphiProblem}). Thus, functions $\varphi_n$ might be determined through problem (\ref{VarphinPreliminar}). Let us observe it states that $\varphi_0$ must be a solution to: 
\begin{subequations}\label{VarphiProblem-0}
\begin{align}
\label{Eq:Varphi0}&2\eta\varphi_0'(\eta)+\varphi_0''(\eta)=0\quad 0<\eta<\lambda\\
&\varphi_0'(0)-\gamma\varphi_0(0)=0\\
&\varphi_0(\lambda)=1
\end{align}
\end{subequations}
while each $\varphi_n$, $n\in\mathbb{N}$, must satisfy:
\begin{subequations}\label{VarphiProblem-n}
\begin{align}
&2\eta\varphi_n'(\eta)+\varphi_n''(\eta)=g_n(\eta)\quad 0<\eta<\lambda\\
&\displaystyle\sum_{k=1}^n\varphi_{k-1}'(0)\varphi_{n-k}(0)+\varphi_n'(0)-\gamma\varphi_n(0)=0\\
&\varphi(\lambda)=0
\end{align}
\end{subequations}
with:
\begin{equation}\label{RHSVarphiProblem-n}
g_n(\eta)=-\displaystyle\sum_{k=1}^n
\left(\varphi'_{k-1}(\eta)\varphi'_{n-k}(\eta)+\varphi_{k-1}(\eta)\varphi_{n-k}''(\eta)\right)\quad 0<\eta<\lambda.
\end{equation}

\begin{remark}\label{Re:Beta0}
Observe that problem (\ref{VarphiProblem-0}) coincides with (\ref{VarphiProblem}) when $\beta=0$.
\end{remark}

In the following we will only work with the zero and first order approximations $\varphi^{(0)}$, $\varphi^{(1)}$. From elementary results in ordinary differential equations (see, for example \cite{PoZa1995}), it can be obtained that the solution $\varphi_0$ to problem (\ref{VarphiProblem-0}) is the function given by:
\begin{equation}\label{Varphi0}
\varphi_0(\eta)=\frac{2}{2+\gamma\sqrt{\pi}\erf(\lambda)}+
\frac{\gamma\sqrt{\pi}}{2+\gamma\sqrt{\pi}\erf(\lambda)}\erf(\eta)\quad 0\leq \eta\leq \lambda.
\end{equation}
Having obtained $\varphi_0$, we can now compute $\varphi_1$ through the problem (\ref{VarphiProblem-n}) (see (\ref{RHSVarphiProblem-n})), and obtain \cite{PoZa1995}: 
\begin{equation}\label{varphi_1}
\begin{split}
\varphi_1(\eta)&=
B_1(2+\gamma\sqrt{\pi}\erf(\eta))+
B_2
+
\frac{\gamma}{\nu^2}\left(5\sqrt{\pi}-2\eta\exp(-\eta^2)
-\gamma\pi\exp(-2\eta^2)\right.\\
&-\left.
2\gamma\sqrt{\pi}\eta\erf(\eta)\exp(-\eta^2)+\frac{\gamma\pi}{2}\erf^2(\eta)+\gamma\sqrt{\pi}\erf(\eta)\exp(-\eta^2)\right)
\end{split}
\end{equation}
with:
\begin{subequations}
\begin{align*}
&B_1=-\frac{B_2}{\nu}+\frac{\gamma}{\nu^3}\left(
-5\sqrt{\pi}\erf(\lambda)+2\lambda\exp(-\lambda^2)+
\gamma\pi\exp(-2\lambda^2)\right.\\
&+
\left.2\gamma\sqrt{\pi}\lambda\erf(\lambda)\exp(-\lambda^2)
-\frac{\gamma\pi}{2}\erf^2(\lambda)-\gamma\sqrt{\pi}\exp(-\lambda^2)\erf(\lambda)
\right)\\
&B_2=\frac{1}{\nu^2}(12+2\gamma+\gamma^2\pi)\\
&\nu=2+\gamma\sqrt{\pi}\erf(\lambda).
\end{align*}
\end{subequations}

Therefore, we have that $\varphi^{(0)}=\varphi_0$ and $\varphi^{(1)}=\varphi_0+\beta\varphi_1$, with $\varphi_0$ and $\varphi_1$ given by (\ref{Varphi0}), (\ref{varphi_1}) respectively. In order to analyse the relation between each of these approximations and the GME function $\varphi$, we define the error $\mathcal{E}^{(N)}$ as:
\begin{equation}\label{e}
\mathcal{E}^{(N)}=\max\left\{\left|\varphi(\eta)-\varphi^{(N)}(\eta)\right|\,:\,0\leq\eta\leq\lambda\right\}\quad (N=0,1).
\end{equation}

In the following, we will show that the GME function $\varphi$ converges uniformly to the zero order approximation $\varphi^{(0)}$, when $\beta\to 0$. As we shall see below, this is closely related to how the problem (\ref{VarphiProblem}) depends on the parameter $\beta$.

\begin{definition}\label{Def:Lips}
Let $0<b<\beta_1$. We will say that problem (\ref{VarphiProblem}) is Lipschitz continuous on the parameter $\beta$ over the interval $[0,b]$ if there exists $L>0$ such that for any $b_1, b_2 \in [0,b]$ the following inequality holds:
\begin{equation}
||\varphi_{b_1}-\varphi_{b_2}||_\infty\leq L|b_1-b_2|,
\end{equation}
where $\varphi_{b_1}$, $\varphi_{b_2}$ are the only solutions in $K$ to problem (\ref{VarphiProblem}) with parameters $b_1$, $b_2$, respectively.   
\end{definition}

It follows from Definition \ref{Def:Lips} that if problem (\ref{VarphiProblem}) is Lipschitz continuous on $\beta$ over some interval $[0,b]$ with $0<b<\beta_1$, then the GME function $\varphi$ converges uniformly on $0<\eta<\lambda$ to the function $\varphi_0$ given by (\ref{Varphi0}), when $\beta\to 0$ (see Remark \ref{Re:Beta0}). In other words, that $\mathcal{E}^{(0)} \to 0$ when $\beta\to 0$ and therefore,  that $\varphi_0$ is a good approximation of the GME function $\varphi$ when the positive parameter $\beta$ is small enough. In the following we will prove that problem (\ref{VarphiProblem}) is in fact Lipschitz continuous on the parameter $\beta$ over $[0,b]$ for any choice of $0<b<\beta_1$. 

\begin{lemma}\label{Le:Cota}
Let $h\in K$, $\eta\in[0,\lambda]$ and $b_1, b_2\in[0,\beta_1)$. We have:
\begin{equation*}
\displaystyle \int_0^\eta \left| \frac{\exp\left(-2\displaystyle\int_0^x\frac{\xi}{1+b_1 \varphi_h(\xi)}d\xi\right)}{1+b_1 \varphi_h(x)}-\frac{\exp\left(-2\displaystyle\int_0^x\frac{\xi}{1+b_2 \varphi_h(\xi)}d\xi\right)}{1+b_2 \varphi_h(\xi)} \right|  dx\leq
\frac{1}{2\gamma \beta_1} | b_1 -b_2 |.
\end{equation*}
\end{lemma}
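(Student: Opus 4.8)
The plan is to deduce this estimate from Lemma~\ref{Le:Cotas}(a) by a rescaling argument, rather than by differentiating the integrand in the parameter and estimating directly. Since the integrand is symmetric under interchanging $b_1$ and $b_2$, I may assume without loss of generality that $b_1\geq b_2$; the case $b_1=b_2$ (in particular $b_1=0$) is trivial because the integrand then vanishes, so I take $b_1>0$.

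The key observation is that the two fractions appearing here are exactly of the form treated in Lemma~\ref{Le:Cotas}(a), provided the \emph{ambient} parameter is chosen as $\beta=b_1$ and the two functions in $K$ are chosen appropriately. Writing $1+b_2\varphi_h=1+b_1\bigl(\tfrac{b_2}{b_1}\varphi_h\bigr)$, I set $h_1=\varphi_h$ and $h_2=\tfrac{b_2}{b_1}\varphi_h$. Because $0\leq \tfrac{b_2}{b_1}\leq 1$ and $\varphi_h\in K$, both $h_1$ and $h_2$ lie in $K$, and with $\beta=b_1$ one has $\Psi_{h_1}=1+b_1\varphi_h$ and $\Psi_{h_2}=1+b_2\varphi_h$. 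Hence the left-hand side of the present lemma coincides with the left-hand side of Lemma~\ref{Le:Cotas}(a) for this choice, and I may invoke that estimate.

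Applying Lemma~\ref{Le:Cotas}(a) with $\beta=b_1$ bounds the integral by $\tfrac{\sqrt{\pi}}{4}\,b_1(1+b_1)^{1/2}(3+b_1)\,\|h_1-h_2\|_\infty$. Since $\|h_1-h_2\|_\infty=\bigl(1-\tfrac{b_2}{b_1}\bigr)\|\varphi_h\|_\infty\leq \tfrac{b_1-b_2}{b_1}$, the factor $b_1$ cancels and the integral is at most $\tfrac{\sqrt{\pi}}{4}(1+b_1)^{1/2}(3+b_1)(b_1-b_2)$. To reach the stated constant I use that $x\mapsto(1+x)^{1/2}(3+x)$ is increasing together with $b_1<\beta_1$, and then rewrite the bound via the defining equation $g(\beta_1)=1$ of~(\ref{eq:delta1}): from $\tfrac{\sqrt{\pi}}{2}\gamma\beta_1(1+\beta_1)^{1/2}(3+\beta_1)=1$ one obtains $\tfrac{\sqrt{\pi}}{4}(1+\beta_1)^{1/2}(3+\beta_1)=\tfrac{1}{2\gamma\beta_1}$, which yields exactly $\tfrac{1}{2\gamma\beta_1}|b_1-b_2|$.

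The only points needing care are the verification that $h_2=\tfrac{b_2}{b_1}\varphi_h\in K$ (so Lemma~\ref{Le:Cotas}(a) is applicable) and the identification $\Psi_{h_2}=1+b_2\varphi_h$; once the rescaling is set up, the remainder is the elementary reduction of the constant through $g(\beta_1)=1$. I do not anticipate any genuine analytic obstacle here, since all of the substantive estimation is already absorbed into Lemma~\ref{Le:Cotas}(a); the mild subtlety is simply that one must apply that lemma at the specific value $\beta=b_1$ of the parameter rather than at a fixed ambient $\beta$.
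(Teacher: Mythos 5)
Your proof is correct, but it takes a genuinely different route from the paper's. The paper argues directly: it applies the Mean Value Theorem to $f(x)=\exp(-2x)$ evaluated at the two inner integrals $x_i=\int_0^x \xi/(1+b_i h(\xi))\,d\xi$, bounds $|f'(u)|\leq 2\exp\left(-\frac{x^2}{1+\beta_1}\right)$ and $|x_1-x_2|\leq \frac{x^2}{2}|b_1-b_2|$, assembles the pointwise bound $\exp\left(-\frac{x^2}{1+\beta_1}\right)(x^2+1)|b_1-b_2|$ on the integrand, and then integrates this Gaussian-type expression, invoking the definition of $\beta_1$ at the end --- in effect redoing, with respect to the parameter $b$, the same kind of computation that underlies Lemma \ref{Le:Cotas}(a). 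Your rescaling observation $1+b_2\varphi_h=1+b_1\left(\tfrac{b_2}{b_1}\varphi_h\right)$ instead converts parameter variation into function variation, so that Lemma \ref{Le:Cotas}(a), applied at the ambient value $\beta=b_1$ (legitimate, since that lemma holds for every $\beta\geq 0$ and $h_2=\tfrac{b_2}{b_1}\varphi_h$ clearly lies in $K$), carries all the analytic work: the factor $b_1$ in its constant cancels exactly against $||h_1-h_2||_\infty\leq \frac{b_1-b_2}{b_1}$, and monotonicity of $x\mapsto(1+x)^{1/2}(3+x)$ together with $g(\beta_1)=1$ from (\ref{eq:delta1}) yields precisely $\frac{1}{2\gamma\beta_1}|b_1-b_2|$. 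Your route is shorter and avoids duplicating the hard estimate, exhibiting the structural point that Lipschitz dependence on the parameter is a consequence of Lipschitz dependence on the function; the paper's route is self-contained at this spot (it does not lean on the citation behind Lemma \ref{Le:Cotas}(a)) and delivers a pointwise bound on the integrand rather than only an integrated one. Both arguments reach the identical final inequality, so your proposal stands as a valid alternative proof.
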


\begin{proof}
Let $f$ be the real function defined on $\mathbb{R}^+_0$ by $f(x)=\exp(-2x)$ and:
\begin{equation*}
x_1= \int_0^x\frac{\xi}{1+b_1 h(\xi)}d\xi, \quad x_2= \int_0^x\frac{\xi}{1+b_2 h(\xi)}d\xi\quad (x>0 \text{ fixed}).
\end{equation*}

It follows from the Mean Value Theorem applied to function $f$ that:
\begin{equation*}
\left| f(x_1)-f(x_2)\right|=|f'(u)| |x_1-x_2|,
\end{equation*}
where $u$ is a real number between $x_1$ and $x_2$. Without any lost of generality, we will assume that $b_2\leq b_1$. Then, $x_1\leq x_2$. We have:
\begin{align*}
&|f'(u)|\leq |f'(x_1)|\leq  2 \exp \left( -\frac{x^2}{1+\beta_1}\right)\text{ since }||h||_\infty\leq 1,\\
& |x_1-x_2| \leq \frac{x^2}{2} |b_1 - b_2|.
\end{align*} 

Therefore, 
\begin{equation*}
\left|f(x_1)-f(x_2)\right| \leq  x^2  \exp \left( -\frac{x^2}{1+\beta_1}\right)|b_1 - b_2|
\end{equation*}

Then, we have:
\begin{equation*}
\begin{split}
\left|
\frac{f(x_1)}{1+b_1 h(x)}-\frac{f(x_2)}{1+b_2 h(x)}
\right|&=
\left|
\frac{f(x_1)-f(x_2)}{1+b_1 h(x)}+\frac{f(x_2)h(x)(b_2-b_1)}{(1+b_1h(x))(1+b_2h(x))}\right|\\
&\leq |f(x_1)-f(x_2)|+|f(x_2)||b_1-b_2|\\
&\leq \exp\left(-\frac{x^2}{1+\beta_1}\right)(x^2+1)|b_1 - b_2|.
\end{split}
\end{equation*}
The final bound is now obtained by integrating the last expression and by the definition of $\beta_1$.
\end{proof}

\begin{theorem}\label{theoLip}
Let $0<b<\beta_1$. The problem (\ref{VarphiProblem}) is Lipschitz continuous on the parameter $\beta$ over the interval $[0,b]$.
\end{theorem}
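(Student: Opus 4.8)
The plan is to exploit the fixed-point characterisation of the GME function together with the contraction estimate already established in Theorem \ref{Th:Contraction} and the parameter-sensitivity bound of Lemma \ref{Le:Cota}. Throughout, I would write $\tau_b$ for the operator $\tau$ of Theorem \ref{Th:FixedPoint} when the parameter $\beta$ takes the value $b$, so that the unique solution $\varphi_b\in K$ to problem (\ref{VarphiProblem}) with parameter $b$ is precisely the fixed point $\tau_b\varphi_b=\varphi_b$; likewise $D_{h,b}$ denotes the constant (\ref{Dh}) with $\beta=b$. Fix $b_1,b_2\in[0,b]$ and assume, without loss of generality, $b_2\le b_1$.

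First I would decompose the difference of the two solutions by inserting the mixed term $\tau_{b_1}\varphi_{b_2}$:
\begin{equation*}
||\varphi_{b_1}-\varphi_{b_2}||_\infty=||\tau_{b_1}\varphi_{b_1}-\tau_{b_2}\varphi_{b_2}||_\infty\le ||\tau_{b_1}\varphi_{b_1}-\tau_{b_1}\varphi_{b_2}||_\infty+||\tau_{b_1}\varphi_{b_2}-\tau_{b_2}\varphi_{b_2}||_\infty.
\end{equation*}
The first summand is handled by the contraction property: since $b_1\le b<\beta_1$ and $g$ is increasing, Theorem \ref{Th:Contraction} gives $||\tau_{b_1}\varphi_{b_1}-\tau_{b_1}\varphi_{b_2}||_\infty\le g(b_1)||\varphi_{b_1}-\varphi_{b_2}||_\infty\le g(b)||\varphi_{b_1}-\varphi_{b_2}||_\infty$, where $g(b)<1$.

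The second summand measures the change of the operator with respect to the parameter at the \emph{fixed} argument $h=\varphi_{b_2}\in K$. Here I would repeat the algebraic estimate of Theorem \ref{Th:Contraction} almost verbatim: splitting $\tau_{b_1}h-\tau_{b_2}h$ into $D_{h,b_1}$ times the integral difference plus the difference $(D_{h,b_1}-D_{h,b_2})$ times $\frac{1}{\gamma}+\int_0^\eta(\cdots)$, then bounding the first piece using $D_{h,b_1}\le\gamma$ (Lemma \ref{Le:Cotas}(b)) and the second piece using $|\frac{1}{\gamma}+\int_0^\eta(\cdots)|\le D_{h,b_2}^{-1}$ together with $|D_{h,b_1}-D_{h,b_2}|\le\gamma^2\,|I_{b_1}(\lambda)-I_{b_2}(\lambda)|$. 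Both pieces collapse to the same integral of the integrand difference, producing the overall factor $2\gamma$; crucially, the integrand difference is now exactly the one appearing in Lemma \ref{Le:Cota} (parameter-driven) rather than in Lemma \ref{Le:Cotas}(a) (function-driven). Invoking Lemma \ref{Le:Cota} with $\eta=\lambda$ and this $h$ then gives $||\tau_{b_1}\varphi_{b_2}-\tau_{b_2}\varphi_{b_2}||_\infty\le 2\gamma\cdot\frac{1}{2\gamma\beta_1}|b_1-b_2|=\frac{1}{\beta_1}|b_1-b_2|$.

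Combining the two bounds yields $||\varphi_{b_1}-\varphi_{b_2}||_\infty\le g(b)||\varphi_{b_1}-\varphi_{b_2}||_\infty+\frac{1}{\beta_1}|b_1-b_2|$, and since $g(b)<1$ I can absorb the first term on the left-hand side to obtain the Lipschitz estimate with constant $L=(\beta_1(1-g(b)))^{-1}$, which is finite and positive. The only genuinely delicate point is justifying that the purely parameter-driven estimate in the second summand has exactly the same structure as the function-driven estimate of Theorem \ref{Th:Contraction}; once one observes that $D_{h,b}$ depends on $b$ only through the same integral whose variation Lemma \ref{Le:Cota} controls, the remaining manipulations are routine and the argument goes through unchanged.
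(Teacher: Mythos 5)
Your proof is correct and follows essentially the same route as the paper's: both rest on the fixed-point characterization, split the difference $\varphi_{b_1}-\varphi_{b_2}$ into a function-driven piece controlled by the contraction estimate of Theorem \ref{Th:Contraction} (i.e.\ Lemma \ref{Le:Cotas}(a)) and a parameter-driven piece controlled by Lemma \ref{Le:Cota}, and then absorb the contractive term using $g(b)<1$ to arrive at the identical constant $L=\frac{1}{\beta_1(1-g(b))}$. The only difference is cosmetic: you insert the mixed term $\tau_{b_1}\varphi_{b_2}$ at the operator level, while the paper performs the corresponding splitting inside the integral of the integrand differences.
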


\begin{proof}
Let $\eta\in[0,\lambda]$, $b_1, b_2 \in [0,b]$ and $\varphi_{b_1}, \varphi_{b_2}\in K$ the solutions to problem (\ref{VarphiProblem}) with parameters $b_1$, $b_2$, respectively. 

Taking into consideration that $\varphi_{b_1}$ and $ \varphi_{b_2}$  are the fixed points of the operator $\tau_{b_1}$ and $\tau_{b_2}$ defined by (\ref{tau}) for $\beta=b_1$ and $\beta=b_2$, respectively, we have that:
\begin{equation*}
|\varphi_{b_1}(\eta)- \varphi_{b_2}(\eta)| \leq 
2 D_{\varphi_{b_1}} \int_0^\lambda \left| \frac{\exp\left(-2\displaystyle\int_0^x\frac{\xi}{1+b_1 \varphi_{b_1}(\xi)}d\xi\right)}{1+b_1 \varphi_{b_1}(x)}-\frac{\exp\left(-2\displaystyle\int_0^x\frac{\xi}{1+b_2 \varphi_{b_2}(\xi)}d\xi\right)}{1+b_2 \varphi_{b_2}(x)} \right| dx.
\end{equation*}

Now, from Lemmas \ref{Le:Cotas}, \ref{Le:Cota} it follows that:
\begin{equation*}
\left|\varphi_{b_1}(\eta)-\varphi_{b_2}(\eta)\right| \leq g(b_1)||\varphi_{b_1}-\varphi_{b_2}||_\infty+ \frac{1}{\beta_1} | b_1 -b_2 |.
\end{equation*}

Since $g$ is an increasing function, $g(\beta_1)=1$ and $b_1\leq b<\beta_1$, we have that $g(b_1)\leq g(b)<1$. Then:
\begin{equation*}
||\varphi_{b_1}-\varphi_{b_2}||_\infty\leq L|b_1-b_2|\quad \text{with }L=\frac{1}{ \beta_1 (1-g(b))}>0.
\end{equation*}
\end{proof}

We end this Section by presenting some comparisons between the GME function $\varphi$, and its zero and first order approximations $\varphi^{(0)}$, $\varphi^{(1)}$. Figures \ref{Fig:GME-GMEAprox-Gamma01-Beta1} to \ref{Fig:GME-GMEAprox-Gamma100-Beta1} show the evolution of the error $\mathcal{E}^{(N)}$, and the plots of the GME function $\varphi$ against the best approximation obtained between $\varphi^{(0)}$ and $\varphi^{(1)}$. Each Figure correspond to one value of $\gamma=0.1,1,10,100$. The GME function $\varphi$ was obtained after solving problem (\ref{VarphiProblem}) through the \texttt{bvodes} routine implemented in Scilab. Numerical computations were made by considering $\lambda\in[0,10]$ and a uniform mesh of step $0.01$ for the interval $[0,\lambda]$. For each choice of the parameter $\gamma$, equation (\ref{eq:delta1}) was numerically solved. The approximative solutions $\beta_1^*$ for each value of $\gamma$ are presented in Table \ref{Ta:Beta1}. According to the election of $\gamma$, parameter $\
beta_1$ was set as $\beta_1^*$ as Table \ref{Ta:Beta1} states. From Figures \ref{Fig:GME-GMEAprox-Gamma01-Beta1} to \ref{Fig:GME-GMEAprox-Gamma100-Beta1} it can be seen that good agreement between the GME function $\varphi$ and either the zero or first order approximations $\varphi^{(0)}$, $\varphi^{(1)}$ can be obtained. They also suggest that the election between $\varphi^{(0)}$ and $\varphi^{(1)}$ it is mediated by the value of $\gamma$.

\begin{table}[H]
\caption{{\em {\small Approximative solutions $\beta^*_1$ to the equation (\ref{eq:delta1}) for $\gamma=0.1,1,10,100$.}}}
\centering
\begin{tabular}{|l|c|c|c|c|}
\hline
$\gamma$&$0.1$&$1$&$10$&$100$\\
\hline
$\beta^*_1$&$1.55$&$3\times 10^{-1}$&$3.65\times 10^{-2}$&$3.75\times 10^{-3}$\\
\hline
\end{tabular}
\label{Ta:Beta1}
\end{table}

\begin{figure}[H]
\caption{{\em{\small Comparison between the GME function and its approximations for $\gamma=0.1$, $\lambda=10$ and $\beta=\beta_1^*$ (see Table \ref{Ta:Beta1}).}}}
\vspace*{-0.5cm}
\centering
\begin{subfigure}{0.45\textwidth}
\includegraphics[scale=0.28]{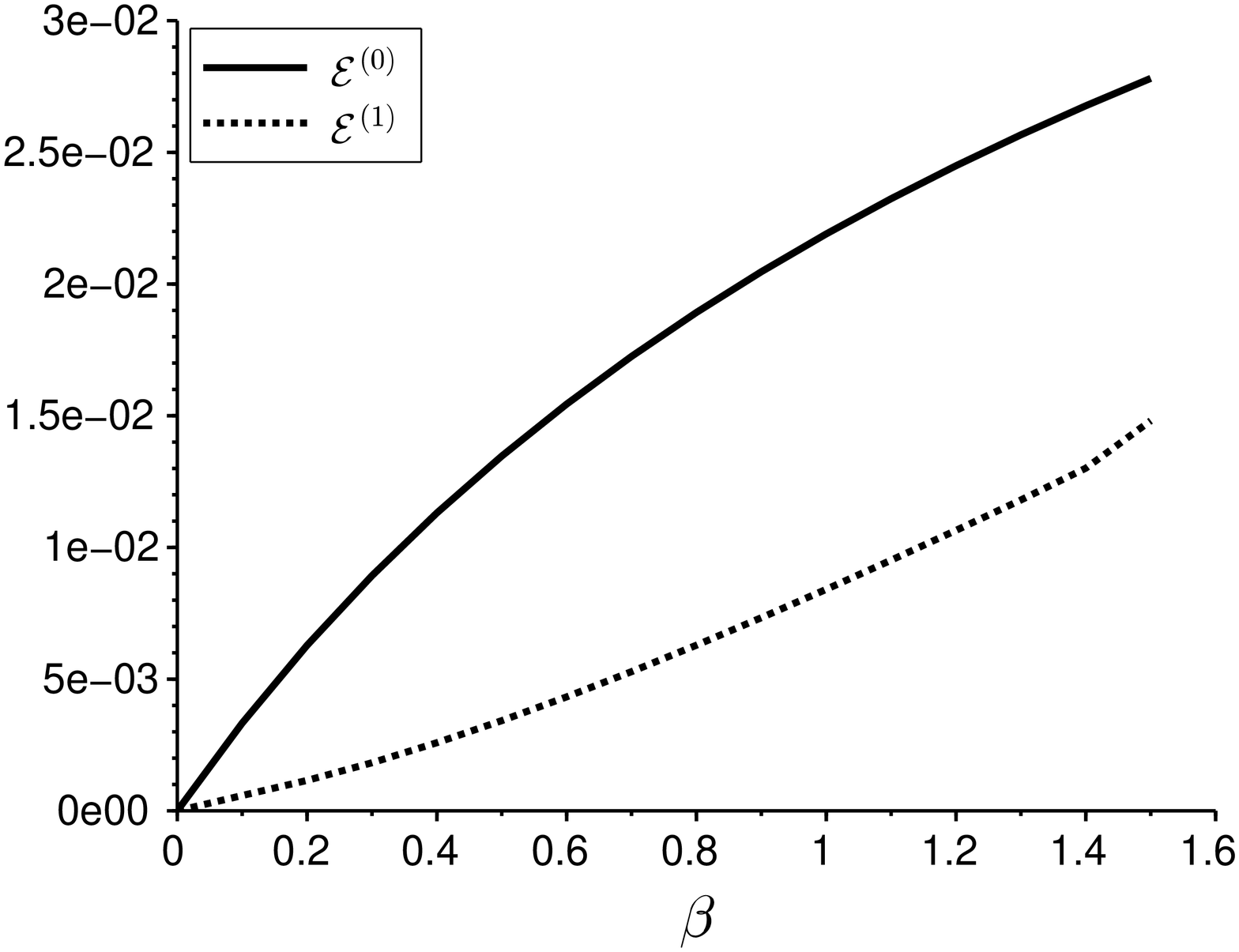}
\caption{{\small Error $\mathcal{E}^{(N)}$, $N=0,1$}}
\end{subfigure}
\begin{subfigure}{0.45\textwidth}
\includegraphics[scale=0.28]{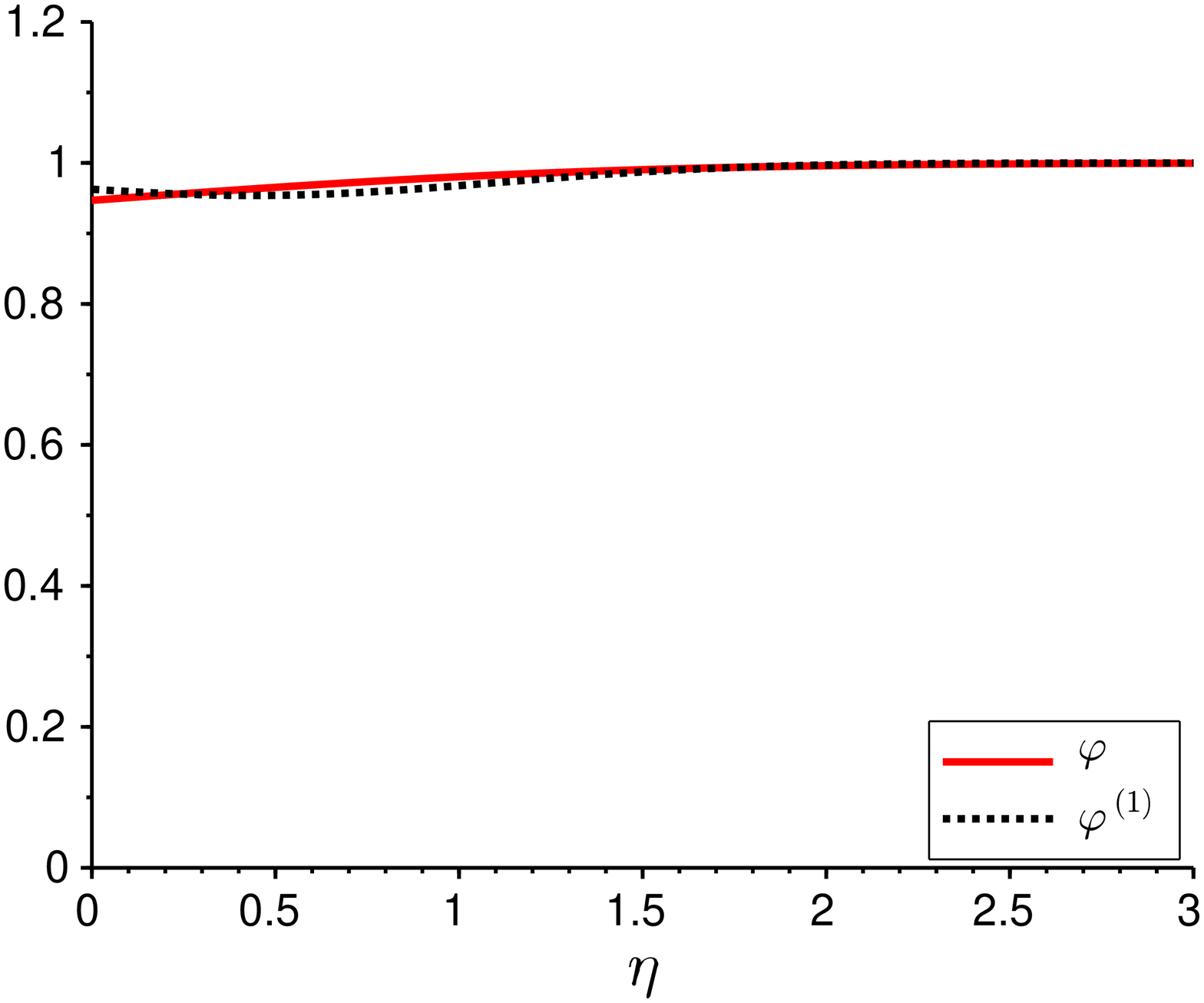}
\caption{{\small GME function $\varphi$ and the approximation $\varphi^{(1)}$}}
\end{subfigure}
\label{Fig:GME-GMEAprox-Gamma01-Beta1}
\end{figure}
\vspace*{-0.75cm}
\begin{figure}[H]
\caption{{\em{\small Comparison between the GME function and its approximations for $\gamma=1$, $\lambda=10$ and $\beta=\beta_1^*$ (see Table \ref{Ta:Beta1}).}}}
\vspace*{-0.5cm}
\centering
\begin{subfigure}{0.45\textwidth}
\includegraphics[scale=0.28]{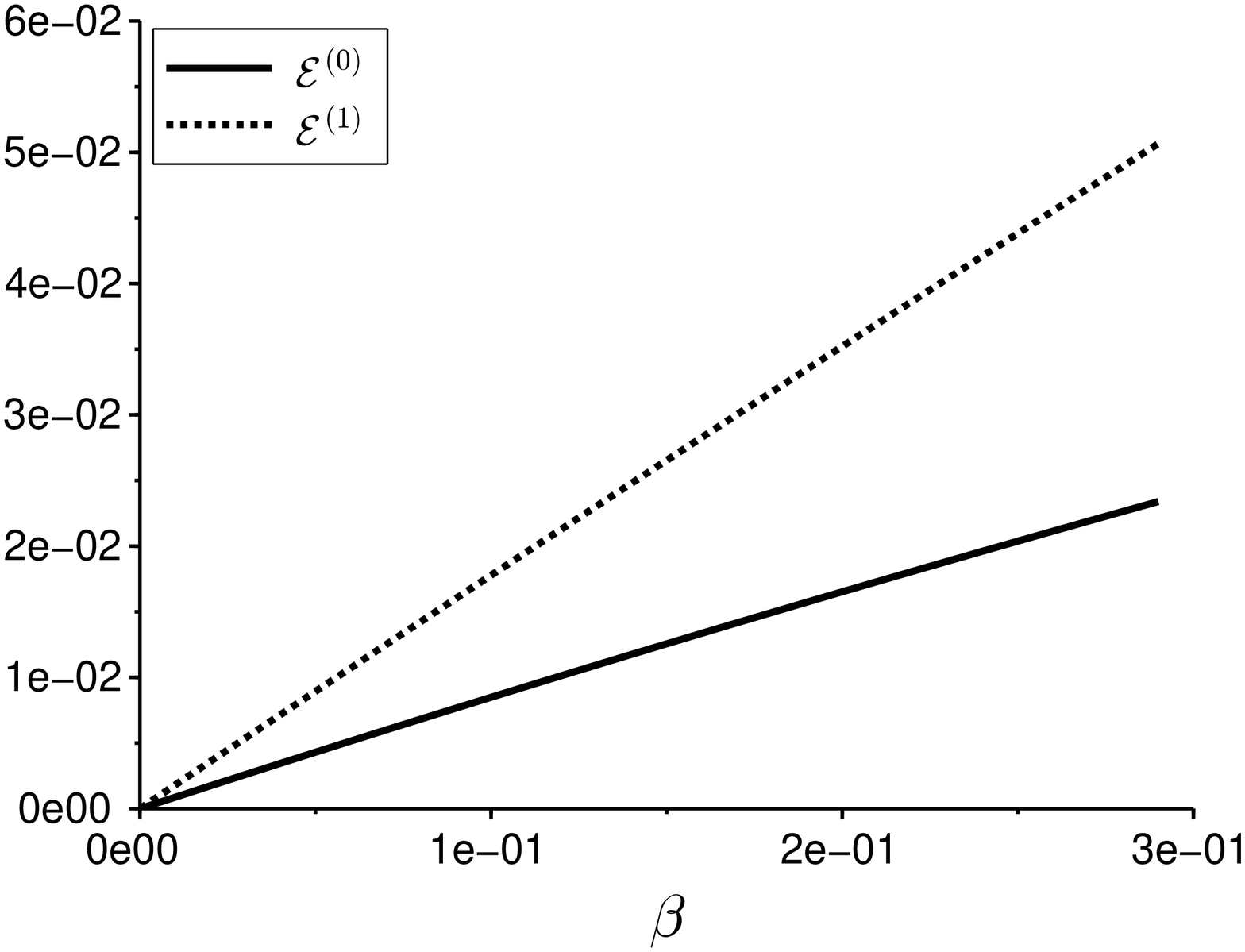}
\caption{{\small Error $\mathcal{E}^{(N)}$, $N=0,1$}}
\end{subfigure}
\begin{subfigure}{0.45\textwidth}
\includegraphics[scale=0.28]{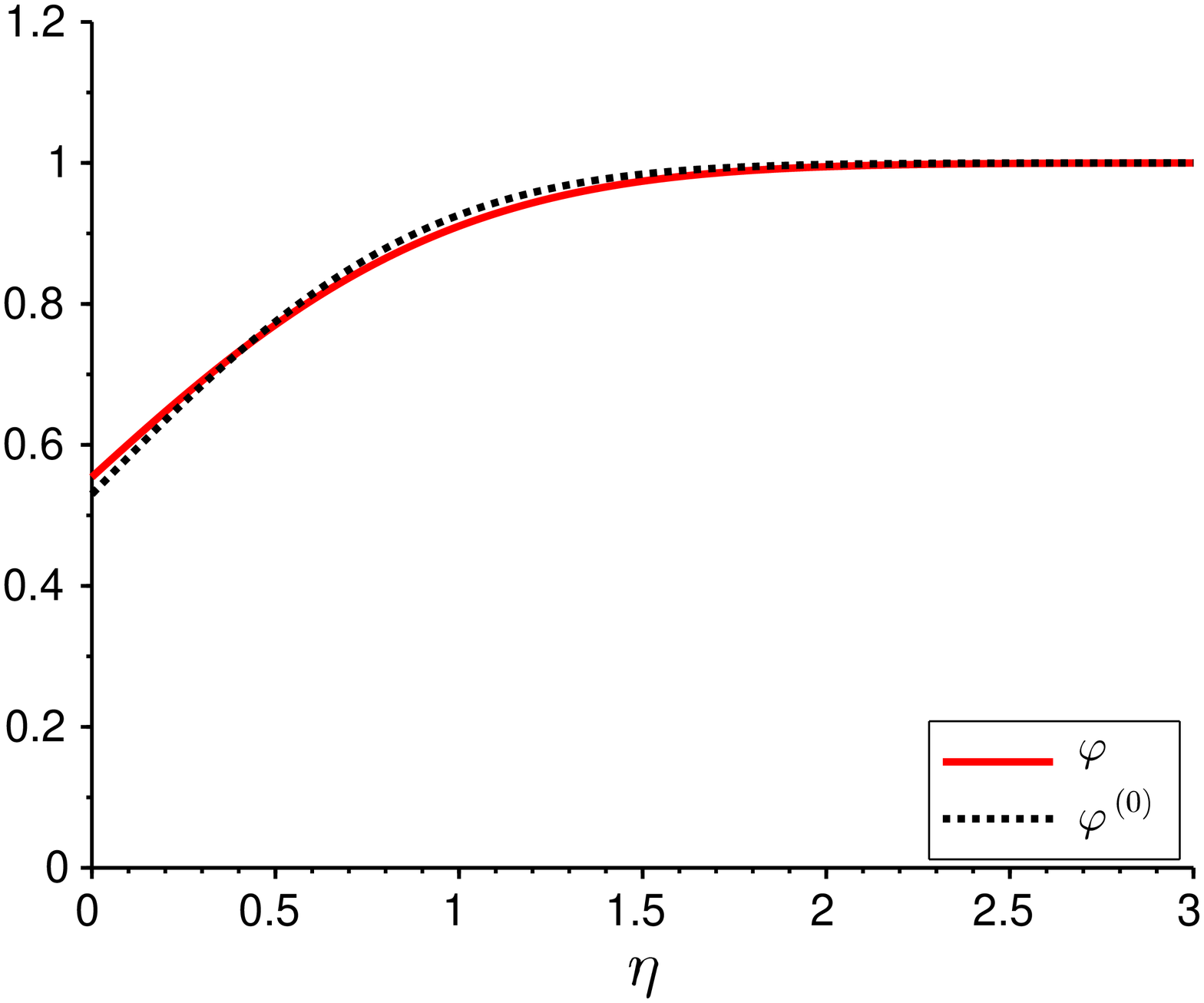}
\caption{{\small GME function $\varphi$ and the approximation $\varphi^{(0)}$}}
\end{subfigure}
\label{Fig:GME-GMEAprox-Gamma1-Beta1}
\end{figure}

\begin{figure}[H]
\caption{{\em{\small Comparison between the GME function and its approximations for $\gamma=10$, $\lambda=10$ and $\beta=\beta_1^*$ (see Table \ref{Ta:Beta1}).}}}
\vspace*{-0.5cm}
\centering
\begin{subfigure}{0.45\textwidth}
\includegraphics[scale=0.28]{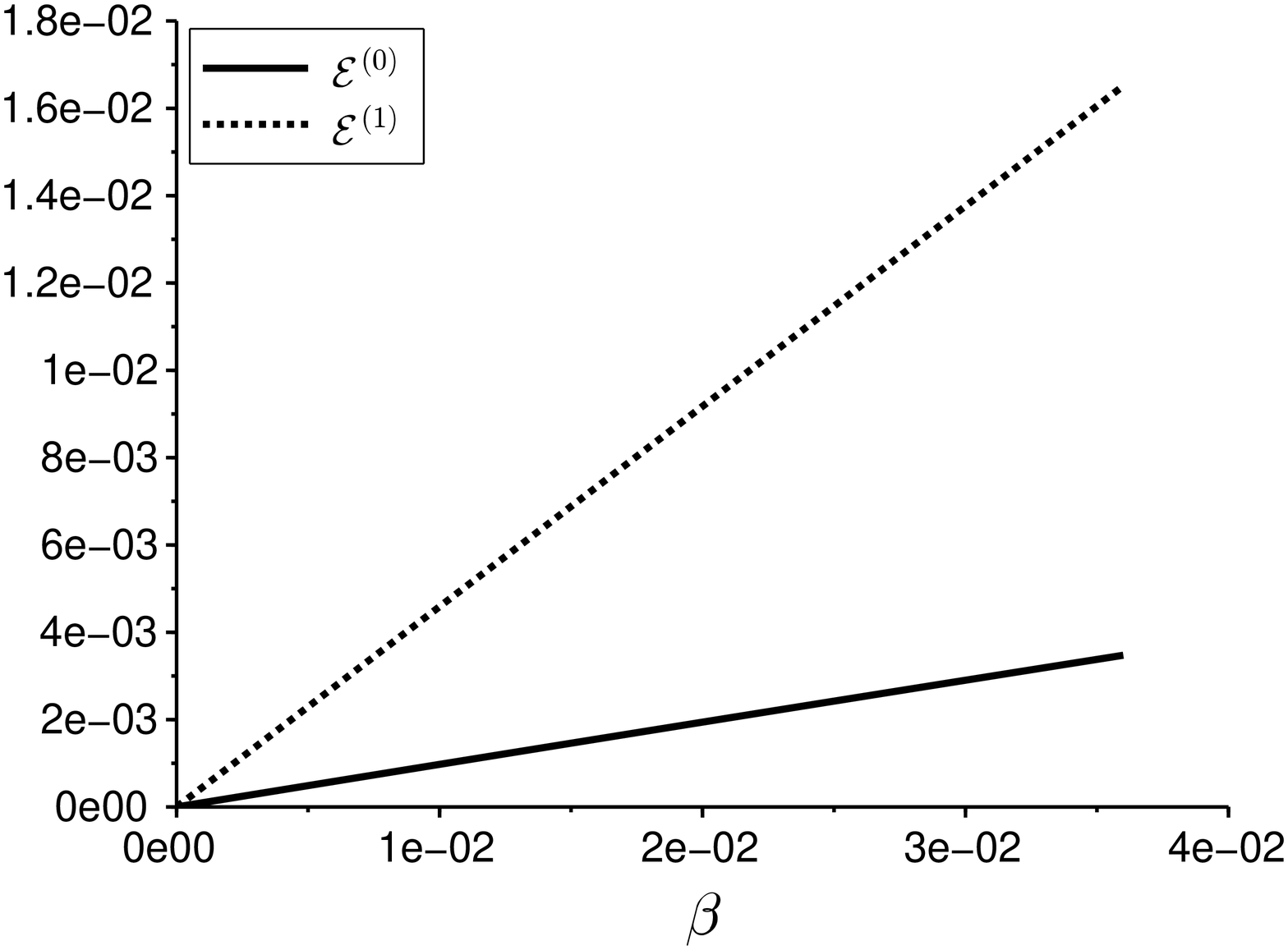}
\caption{{\small Error $\mathcal{E}^{(N)}$, $N=0,1$}}
\end{subfigure}
\begin{subfigure}{0.45\textwidth}
\includegraphics[scale=0.28]{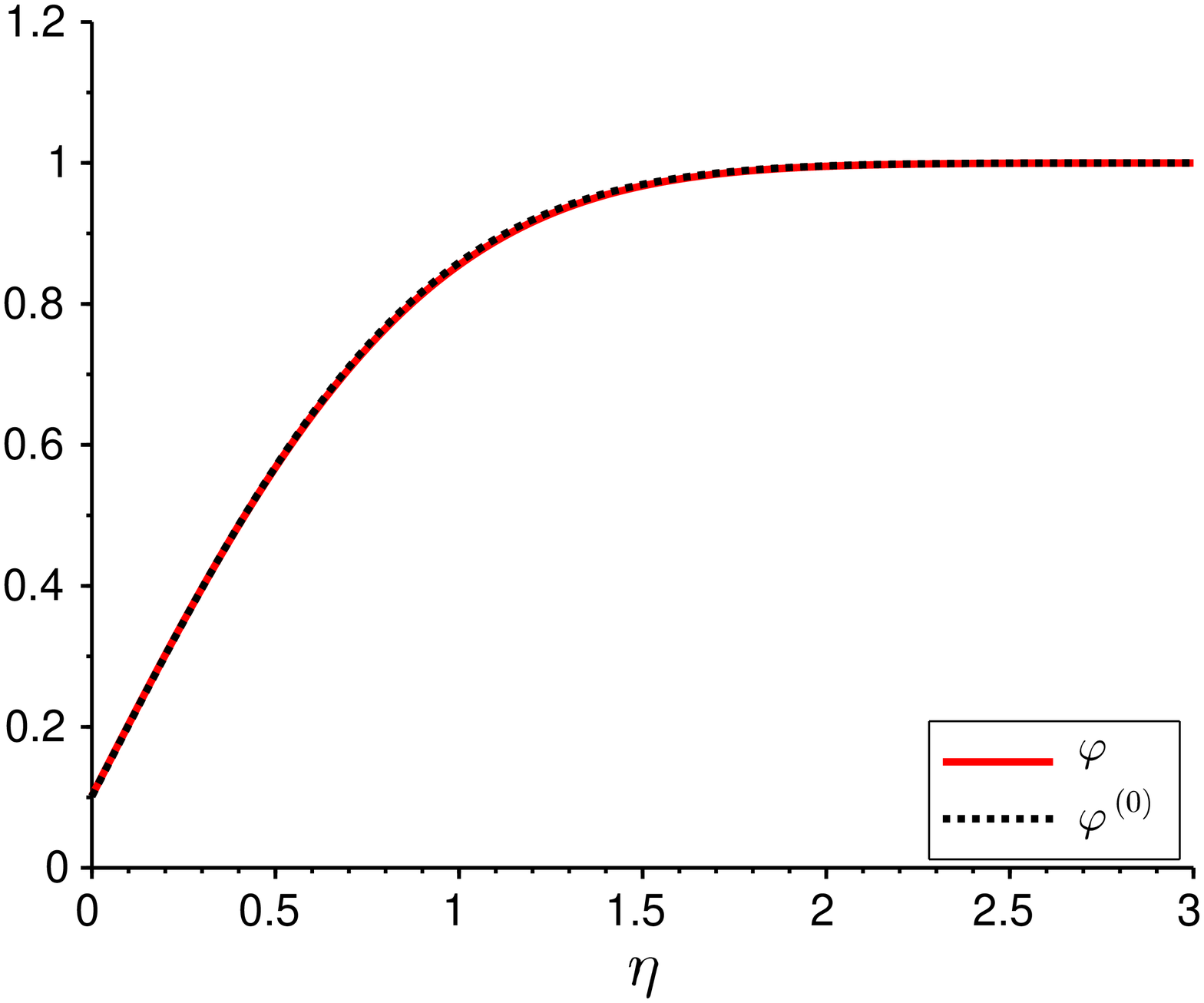}
\caption{{\small GME function $\varphi$ and the approximation $\varphi^{(0)}$}}
\end{subfigure}
\label{Fig:GME-GMEAprox-Gamma10-Beta1}
\end{figure}
\vspace*{-0.75cm}
\begin{figure}[H]
\caption{{\em{\small Comparison between the GME function and its approximations for $\gamma=100$, $\lambda=10$ and $\beta=\beta_1^*$ (see Table \ref{Ta:Beta1}).}}}
\vspace*{-0.5cm}
\centering
\begin{subfigure}{0.45\textwidth}
\includegraphics[scale=0.28]{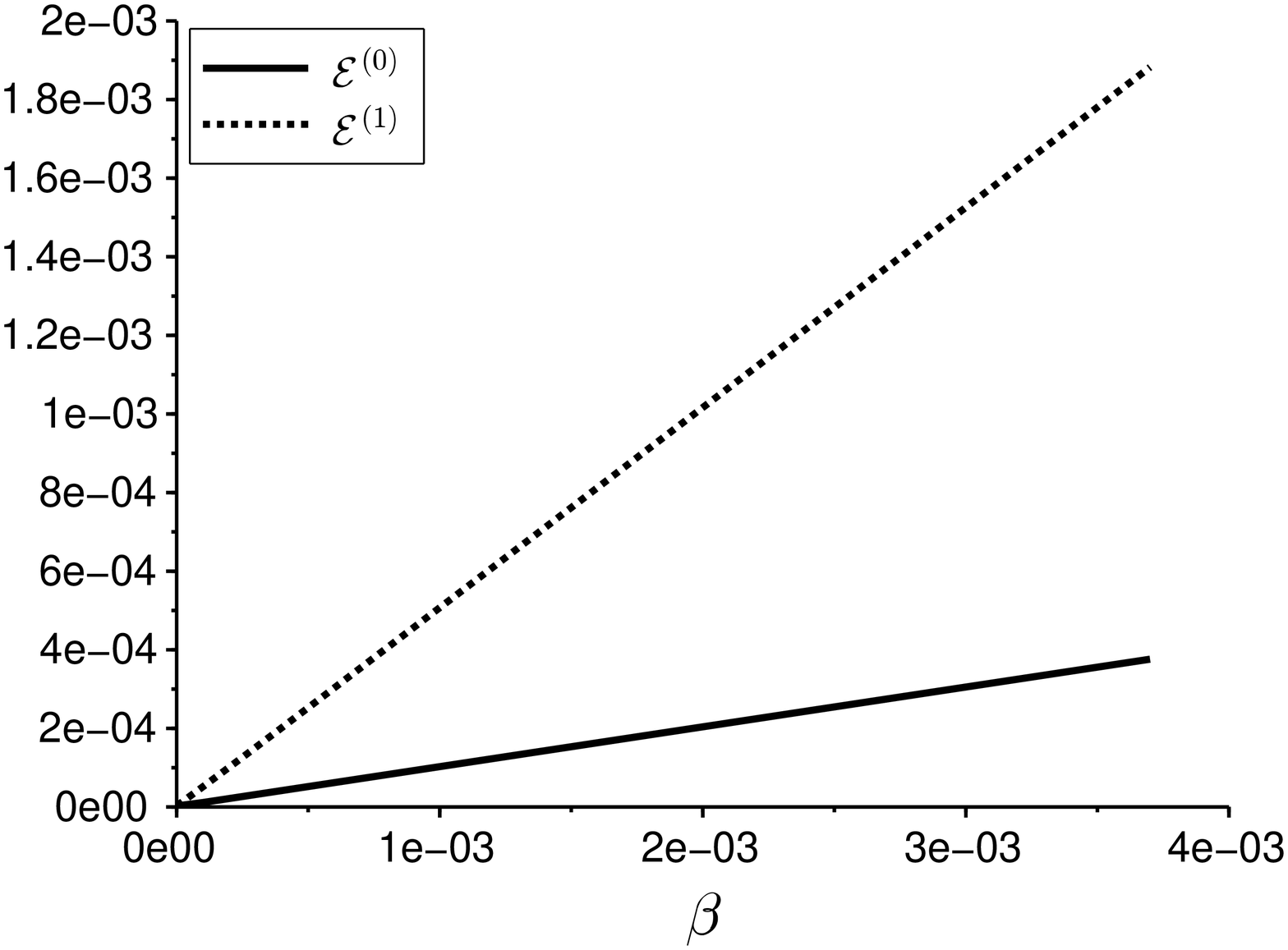}
\caption{{\small Error $\mathcal{E}^{(N)}$, $N=0,1$}}
\end{subfigure}
\begin{subfigure}{0.45\textwidth}
\includegraphics[scale=0.28]{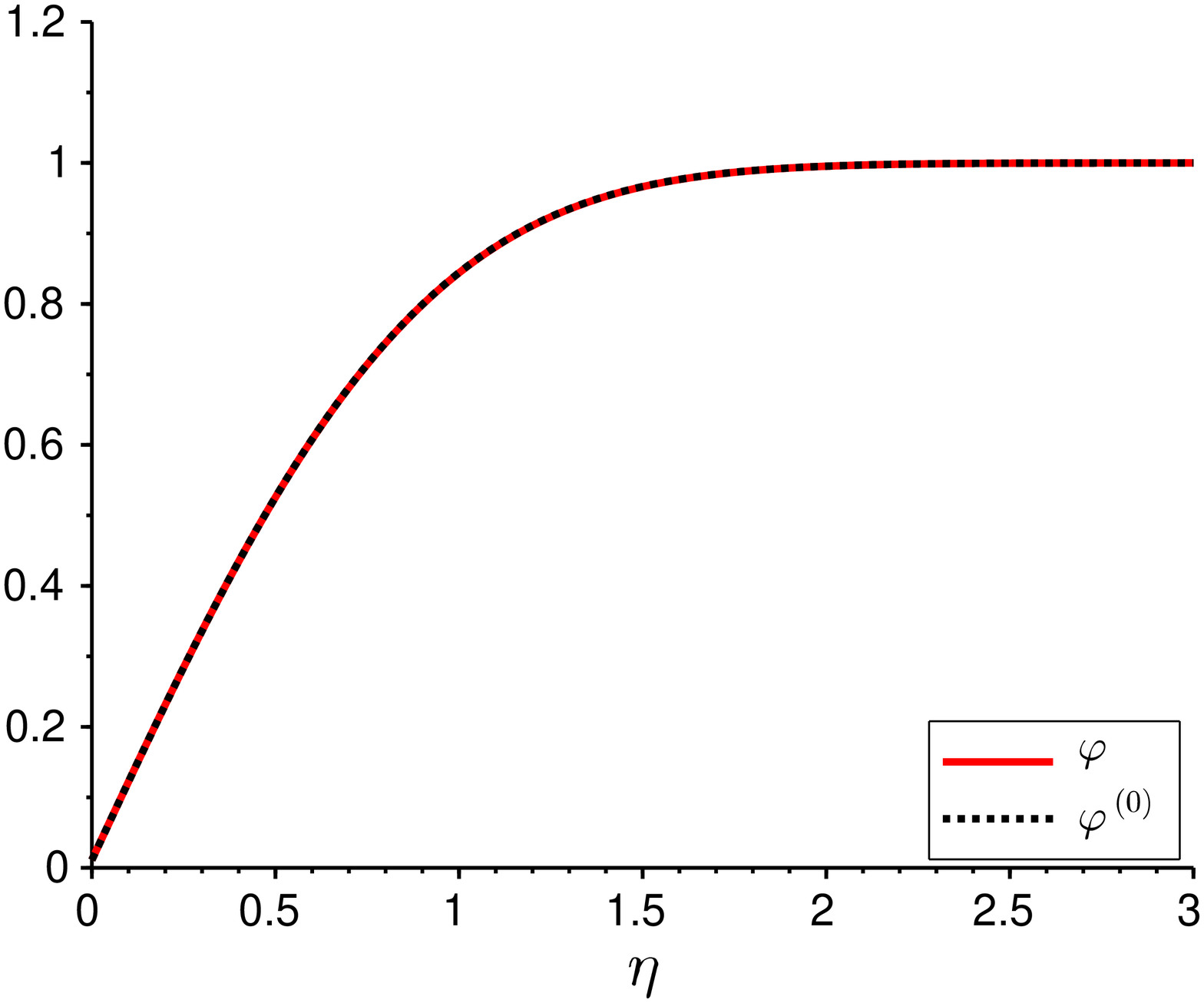}
\caption{{\small GME function $\varphi$ and the approximation $\varphi^{(0)}$}}
\end{subfigure}
\label{Fig:GME-GMEAprox-Gamma100-Beta1}
\end{figure}

\subsection{Analysis of equation (\ref{Eq:VarphiLambda}).}\label{Sect:Eq}
We will now investigate the relation between the solution $\varphi$ to problem (\ref{VarphiProblem}) found in Section \ref{Sect:GME} and the upper bound of its domain, that is the parameter $\lambda>0$, in order to analyse the existence of solution to equation (\ref{Eq:VarphiLambda}). Throughout this Section, $\beta_1$ will refer to the only positive solution to equation (\ref{eq:delta1}). 

\begin{lemma}\label{Le:GMEDerProp}
Let $\gamma>0$, $\lambda>0$, $0\leq\beta<\beta_1$. If $\varphi$ is the GME function which belongs to $K$, then we have:
\begin{equation*}
\text{i) } g_1(\lambda)<\varphi'(\lambda)<g_2(\lambda),
\hspace{3cm}
\text{ii) } \varphi'(\lambda)\leq \frac{\gamma}{1+\beta},
\end{equation*}
where $g_1(\lambda)$, $g_2(\lambda)$ are given by:
\begin{align*}
&g_1(\lambda)=\frac{\gamma}{1+\beta}\exp\left(-\lambda^2\right)\left(1+\gamma\displaystyle\int_0^\lambda\exp\left(-\frac{\eta^2}{1+\beta}\right)d\eta\right)^{-1},\\[0.25cm]
&g_2(\lambda)=\frac{\gamma}{1+\beta}\exp\left(-\frac{\lambda}{1+\beta}\right).
\end{align*}
\end{lemma}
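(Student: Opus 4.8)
The plan is to exploit the fixed-point representation of the GME function rather than the differential equation directly. Since $\varphi\in K$ is the GME function, Theorem \ref{Th:FixedPoint} tells us that $\varphi=\tau\varphi$, so with $\Psi_\varphi=1+\beta\varphi$ and $D_\varphi$ given by (\ref{Dh}) taken at $h=\varphi$, I have the explicit formula
\[
\varphi(\eta)=D_\varphi\left(\frac{1}{\gamma}+\int_0^\eta\frac{\exp\left(-2\int_0^x\frac{\xi}{\Psi_\varphi(\xi)}d\xi\right)}{\Psi_\varphi(x)}\,dx\right).
\]
Differentiating and evaluating at $\eta=\lambda$, and using $\varphi(\lambda)=1$ so that $\Psi_\varphi(\lambda)=1+\beta$, yields the identity on which the whole argument rests:
\[
\varphi'(\lambda)=\frac{D_\varphi}{1+\beta}\exp\left(-2\int_0^\lambda\frac{\xi}{\Psi_\varphi(\xi)}d\xi\right).
\]
Everything then reduces to sandwiching the two factors, the scalar $D_\varphi$ and the exponential weight.

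The only structural input I would use is Remark \ref{Re:PsiBounds}, namely $1\le\Psi_\varphi\le 1+\beta$, which gives $\frac{\xi}{1+\beta}\le\frac{\xi}{\Psi_\varphi(\xi)}\le\xi$. Integrating over $[0,\lambda]$ and exponentiating,
\[
\exp\left(-\lambda^2\right)\le\exp\left(-2\int_0^\lambda\frac{\xi}{\Psi_\varphi(\xi)}d\xi\right)\le\exp\left(-\frac{\lambda^2}{1+\beta}\right).
\]
For the upper estimate in i) I combine the right-hand inequality above with $D_\varphi<\gamma$, which is strict because the integral appearing in (\ref{Dh}) is strictly positive for $\lambda>0$ (the non-strict bound $D_\varphi\le\gamma$ being Lemma \ref{Le:Cotas}b); this gives $\varphi'(\lambda)<\frac{\gamma}{1+\beta}\exp\!\left(-\frac{\lambda^2}{1+\beta}\right)=g_2(\lambda)$. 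For part ii) I simply drop the exponential weight, which is at most $1$, and use $D_\varphi\le\gamma$, obtaining $\varphi'(\lambda)\le\frac{D_\varphi}{1+\beta}\le\frac{\gamma}{1+\beta}$.

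The lower estimate in i) is the delicate one, because it requires a lower bound on $D_\varphi$, and hence an \emph{upper} bound on the integral in the denominator of (\ref{Dh}). Here I mix the two opposite bounds on $\Psi_\varphi$: using $\Psi_\varphi\ge1$ on the outer prefactor and $\Psi_\varphi\le1+\beta$ inside the inner exponent, I get
\[
\frac{\exp\left(-2\int_0^x\frac{\xi}{\Psi_\varphi(\xi)}d\xi\right)}{\Psi_\varphi(x)}\le\exp\left(-\frac{x^2}{1+\beta}\right),
\]
so that $D_\varphi\ge\gamma\left(1+\gamma\int_0^\lambda\exp\left(-\frac{x^2}{1+\beta}\right)dx\right)^{-1}$. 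Multiplying this by the left-hand inequality $\exp\left(-2\int_0^\lambda\frac{\xi}{\Psi_\varphi}d\xi\right)\ge\exp(-\lambda^2)$ reproduces exactly $g_1(\lambda)$, giving $\varphi'(\lambda)\ge g_1(\lambda)$.

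The main obstacle I anticipate is purely one of strictness rather than of the estimates themselves. The upper strict inequality in i) is robust, since $D_\varphi<\gamma$ holds for every $\beta\ge0$ as soon as $\lambda>0$. For the lower bound, however, both of the estimates just used collapse to equalities when $\beta=0$ (where $\Psi_\varphi\equiv1$), so the \emph{strict} inequality $g_1(\lambda)<\varphi'(\lambda)$ genuinely relies on $\beta>0$, where $\Psi_\varphi$ exceeds $1$ on a set of positive measure (recall $\varphi$ is increasing with $\varphi(\lambda)=1$, hence $\varphi>0$ near $\lambda$). I would therefore make the strictness explicit by tracing it back to the strict positivity of $\beta\varphi$ on a subinterval, and flag that at $\beta=0$ the lower bound is attained with equality.
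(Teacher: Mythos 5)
Your proposal follows essentially the same route as the paper's own proof: the paper's argument is exactly the identity $\varphi'(\lambda)=\frac{D_\varphi}{\Psi_\varphi(\lambda)}\exp\left(-2\int_0^\lambda\frac{\xi}{\Psi_\varphi(\xi)}\,d\xi\right)$ coming from the fixed-point representation, followed by the bounds $\gamma\left(1+\gamma\int_0^\lambda\exp\left(-\frac{\eta^2}{1+\beta}\right)d\eta\right)^{-1}\leq D_\varphi\leq\gamma$ together with Remark \ref{Re:PsiBounds} and ``elementary boundedness techniques'' --- which is precisely your sandwich argument, spelled out. Two of your observations go beyond the paper and deserve comment. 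First, the upper bound you actually derive is $\frac{\gamma}{1+\beta}\exp\left(-\frac{\lambda^2}{1+\beta}\right)$, which you then label $g_2(\lambda)$; the lemma as printed has $\exp\left(-\frac{\lambda}{1+\beta}\right)$, with $\lambda$ rather than $\lambda^2$ in the exponent. Your bound implies the printed one only when $\lambda\geq 1$; for $\lambda<1$ the printed $g_2$ is in fact false (take $\beta=0$, $\gamma=0.1$, $\lambda=1/2$: then $\varphi'(\lambda)=\gamma e^{-\lambda^2}\left(1+\gamma\int_0^\lambda e^{-\eta^2}\,d\eta\right)^{-1}\approx 0.074$, while $\frac{\gamma}{1+\beta}e^{-\lambda/(1+\beta)}=\gamma e^{-\lambda}\approx 0.061$), so the printed exponent is evidently a typo for $\lambda^2$ and the statement you prove is the correct one; you should state this explicitly rather than silently identify the two expressions. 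Second, your point about strictness of the lower bound is accurate and is glossed over by the paper: at $\beta=0$ one has $\varphi'(\lambda)=g_1(\lambda)$ exactly (compare with $\varphi_0$ in (\ref{Varphi0})), so the strict inequality $g_1(\lambda)<\varphi'(\lambda)$ genuinely requires $\beta>0$, and your argument via $\Psi_\varphi>1$ on a neighbourhood of $\lambda$ (where $\varphi$ is positive by continuity and $\varphi(\lambda)=1$) is the right way to obtain it there.
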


\begin{proof}
\begin{enumerate}
\item[]
\item[i)] From the definition of $\varphi$ as the unique fixed point of the operator $\tau$ given by (\ref{tau}), we have that:
\begin{equation*}
\varphi'(\lambda)=\frac{D_h}{\Psi_\varphi(\lambda)}\exp\left(-2\displaystyle\int_0^\lambda\frac{\xi}{\Psi_\varphi(\xi)}d\xi\right).
\end{equation*}
Now the proof follows from:
\begin{equation*}
\gamma\left(1+\gamma\displaystyle\int_0^\lambda\exp\left(-\frac{\eta^2}{1+\beta}\right)d\eta\right)^{-1}
\leq D_h \leq \gamma,
\end{equation*}
the bounds for $\Psi_\varphi$ given in Remark \ref{Re:PsiBounds} and elementary boundedness techniques.
\item[ii)] It is a direct consequence of the second inequality in i).
\end{enumerate}
\end{proof}

\begin{remark}\label{Re:Lim}
The first part of Lemma \ref{Le:GMEDerProp} together with the Squeeze Theorem implies that $\displaystyle\lim_{\lambda\to 0^+}\varphi'(\lambda)=\frac{\gamma}{1+\beta}$.
\end{remark}

\begin{lemma}\label{Le:GMEContDep}
Let $\gamma>0$, $0\leq	\beta<\beta_1$. If $\varphi$ is the GME error function which belongs to $K$, then $\varphi'$ is continuous on the parameter $\lambda>0$.
\end{lemma}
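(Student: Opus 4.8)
The quantity to control is the endpoint derivative $\varphi'(\lambda)=\varphi_\lambda'(\lambda)$ regarded as a function of $\lambda$, exactly as in Lemma~\ref{Le:GMEDerProp} and Remark~\ref{Re:Lim}. My plan is to reduce the claim to the continuity of two scalar functionals of $\lambda$ extracted from the fixed-point representation of $\varphi$. Writing $\Psi_\varphi=1+\beta\varphi$ and using $\varphi=\tau\varphi$ together with $\Psi_\varphi(\lambda)=1+\beta$ (as in the proof of Lemma~\ref{Le:GMEDerProp}), the endpoint derivative admits the closed form
\begin{equation*}
\varphi'(\lambda)=\frac{\gamma}{1+\beta}\cdot\frac{\exp\!\left(-2\displaystyle\int_0^\lambda\frac{\xi}{\Psi_\varphi(\xi)}\,d\xi\right)}{1+\gamma\displaystyle\int_0^\lambda\frac{\exp\!\left(-2\int_0^x\frac{\xi}{\Psi_\varphi(\xi)}\,d\xi\right)}{\Psi_\varphi(x)}\,dx}.
\end{equation*}
Introducing
\begin{equation*}
J(\lambda)=\int_0^\lambda\frac{\xi}{\Psi_\varphi(\xi)}\,d\xi,\qquad
I(\lambda)=\int_0^\lambda\frac{\exp\!\left(-2\int_0^x\frac{\xi}{\Psi_\varphi(\xi)}\,d\xi\right)}{\Psi_\varphi(x)}\,dx,
\end{equation*}
the right-hand side is an analytic function of the pair $(I(\lambda),J(\lambda))$, so it suffices to prove that $\lambda\mapsto I(\lambda)$ and $\lambda\mapsto J(\lambda)$ are continuous on $(0,+\infty)$.

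The core step is a continuity estimate for the GME function itself in $\lambda$, in the spirit of Theorem~\ref{theoLip} but now with a \emph{moving right endpoint}. Fixing $\lambda_0>0$ and taking $\lambda_1,\lambda_2$ in a compact neighbourhood of $\lambda_0$, I would set $m=\min\{\lambda_1,\lambda_2\}$ and compare $\varphi_{\lambda_1}$ and $\varphi_{\lambda_2}$ on the common interval $[0,m]$, where both belong to $K$. Subtracting the identities $\varphi_{\lambda_i}=\tau\varphi_{\lambda_i}$ and splitting the difference into the part coming from $D_{\varphi_{\lambda_1}}-D_{\varphi_{\lambda_2}}$ and the part coming from the difference of the integrands, I would bound the integrand differences by Lemma~\ref{Le:Cotas}~a) in terms of $\|\varphi_{\lambda_1}-\varphi_{\lambda_2}\|_{\infty,[0,m]}$, and compare the normalising constants by writing $D_{\varphi_{\lambda_i}}=\gamma(1+\gamma I(\lambda_i))^{-1}$ and invoking $0<D_{\varphi_{\lambda_i}}\le\gamma$ from Lemma~\ref{Le:Cotas}~b).

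Two features distinguish this from Theorem~\ref{theoLip}, and I expect the delicate point to lie in handling them so that the argument covers the full range $0\le\beta<\beta_1$. First, $I(\lambda_1)$ and $I(\lambda_2)$ are integrals over different intervals; the excess over the strip $[m,\max\{\lambda_1,\lambda_2\}]$ is absorbed directly by the uniform bound on the integrand (numerator at most $1$, $\Psi_\varphi\ge1$), contributing a term of order $|\lambda_1-\lambda_2|$. Second, the resulting inequality is self-referential in $\|\varphi_{\lambda_1}-\varphi_{\lambda_2}\|_{\infty,[0,m]}$, and the crux is to organise the bounds so that the self-referential coefficient is \emph{exactly} $g(\beta)$ rather than something larger. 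The key is the cancellation $\tfrac{1}{D_{\varphi_{\lambda_2}}}\,|D_{\varphi_{\lambda_1}}-D_{\varphi_{\lambda_2}}| = D_{\varphi_{\lambda_1}}\,|I(\lambda_1)-I(\lambda_2)|\le\gamma\,|I(\lambda_1)-I(\lambda_2)|$, which prevents spurious factors; Lemma~\ref{Le:Cotas}~a) then contributes one half-multiple of $g(\beta)$ from the constant-difference term and another from the integrand-difference term, combining to precisely $g(\beta)$. Since $g$ is increasing and $g(\beta)<g(\beta_1)=1$ for $\beta<\beta_1$, this coefficient can be absorbed, yielding $\|\varphi_{\lambda_1}-\varphi_{\lambda_2}\|_{\infty,[0,m]}\le \frac{\gamma}{1-g(\beta)}\,|\lambda_1-\lambda_2|$ for every admissible $\beta$.

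With this Lipschitz bound on $\varphi_\lambda$ in hand, the continuity of $J$ and $I$ follows by the same moving-endpoint split: the contribution of the strip $[m,\max\{\lambda_1,\lambda_2\}]$ is $O(|\lambda_1-\lambda_2|)$ because the integrands are bounded, while the contribution on $[0,m]$ is controlled by $\beta\|\varphi_{\lambda_1}-\varphi_{\lambda_2}\|_{\infty,[0,m]}$ (for $J$, through $\tfrac{1}{\Psi_{\varphi_{\lambda_1}}}-\tfrac{1}{\Psi_{\varphi_{\lambda_2}}}$) or by Lemma~\ref{Le:Cotas}~a) (for $I$), hence again $O(|\lambda_1-\lambda_2|)$ by the previous step. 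Thus $I$ and $J$ are continuous (indeed locally Lipschitz), and feeding this into the closed form for $\varphi'(\lambda)$ gives its continuity on $(0,+\infty)$. The main obstacle throughout is purely that $\varphi_{\lambda_1}$ and $\varphi_{\lambda_2}$ are defined on different intervals, so the clean comparison of Theorem~\ref{theoLip} does not transfer verbatim; the remedy is to work on the overlap, bookkeep the boundary strip separately, and keep the self-referential constant pinned at $g(\beta)<1$.
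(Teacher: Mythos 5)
Your proof is correct, but it takes a genuinely different route from the paper's. The paper proves this lemma by rescaling the problem to the fixed interval $[0,1]$ via $\zeta=\eta/\lambda$, so that $\lambda$ appears only as a parameter in the coefficients of the transformed boundary value problem, and then invokes Theorem 7.5 of \cite{CoLe1987} on smooth dependence of solutions on parameters to conclude $C^1$ dependence on $\lambda$. You instead stay with the fixed-point identity $\varphi=\tau\varphi$ from (\ref{tau}) and carry out a quantitative comparison of $\varphi_{\lambda_1}$ and $\varphi_{\lambda_2}$ on the overlap $[0,m]$, $m=\min(\lambda_1,\lambda_2)$, absorbing the boundary strip via the trivial bound on the integrand and pinning the self-referential coefficient at exactly $g(\beta)<1$; this is the contraction bookkeeping of Theorem \ref{theoLip} transplanted to a moving endpoint, and your accounting is right: Lemma \ref{Le:Cotas}~a) contributes $g(\beta)/2$ through the integrand difference and another $g(\beta)/2$ through the difference of the constants $D_h$, thanks to the identity $|D_1-D_2|/D_2=D_1\,|I(\lambda_1)-I(\lambda_2)|\le\gamma\,|I(\lambda_1)-I(\lambda_2)|$, which yields $\|\varphi_{\lambda_1}-\varphi_{\lambda_2}\|_{\infty,[0,m]}\le\tfrac{\gamma}{1-g(\beta)}|\lambda_1-\lambda_2|$ and then local Lipschitz continuity of your functionals $I$, $J$ and hence of $\lambda\mapsto\varphi'(\lambda)$. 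As for what each approach buys: the paper's argument is much shorter and formally delivers $C^1$ rather than mere continuity, but it leans on a theorem stated for initial value problems whose application to a nonlinear boundary value problem is left implicit (and the rescaled equation as printed contains a slip, $2\lambda^2\zeta z(\zeta)$ in place of $2\lambda^2\zeta z'(\zeta)$); your argument is longer but elementary and self-contained, reuses only the paper's own Lemma \ref{Le:Cotas} and the function $g$ of (\ref{g}), handles the different domains explicitly, and produces an explicit local Lipschitz constant --- which is more than enough for the continuity of $H$ needed in Theorem \ref{Th:H}.
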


\begin{proof}
After the change of variables:
\begin{equation*}
y(\eta)=z\left(\zeta\right),\hspace{2cm}\zeta=\frac{\eta}{\lambda},
\end{equation*}
we have that problem (\ref{VarphiProblem}) is equivalent to:
\begin{align*}
&\left[\left(1+\beta z(\zeta)\right)z'(\zeta)\right]'+2\lambda^2\zeta z(\zeta)=0\quad\quad 0<\zeta<1\\
&\left[1+\beta z(0)\right]z'(0)-\lambda\gamma z(0)=0\\
&z(1)=1.
\end{align*}
It follows from the fixed point expression of $\varphi$ that $\varphi'$ belongs to $K$. Applying Theorem 7.5 of \cite{CoLe1987} to the previous differential problem in the space $(0,1) \times (K \times K) \times \mathbb{R}^+$, we have that its solution is $C^1$ on the parameter $\lambda >0$.
\end{proof}

\begin{theorem}\label{Th:H}
Let $\gamma>0$. If $0\leq\beta<\beta_1$ then equation (\ref{Eq:VarphiLambda}) has at least one solution.  
\end{theorem}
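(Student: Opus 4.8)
The plan is to recast the existence of a solution to equation (\ref{Eq:VarphiLambda}) as a root-finding problem to which Bolzano's intermediate value theorem applies. First I would define, for each $\lambda>0$, the real-valued function
\[
F(\lambda)=\frac{\varphi'(\lambda)}{\lambda},
\]
where $\varphi$ denotes the GME function in $K$ associated with the parameter $\lambda$ (whose existence and uniqueness are guaranteed by Theorem \ref{Th:EyU-GME}). With this notation, equation (\ref{Eq:VarphiLambda}) reads $F(\lambda)=\frac{2}{(1+\beta)\text{Ste}_\infty}$, and since $\text{Ste}_\infty>0$ and $1+\beta>0$, the right-hand side is a fixed positive constant. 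It therefore suffices to show that the range of $F$ over $(0,+\infty)$ contains this constant.

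Next I would assemble the three ingredients needed for the intermediate value theorem. Continuity of $F$ on $(0,+\infty)$ follows at once from Lemma \ref{Le:GMEContDep}, which provides the continuity of $\varphi'(\lambda)$ with respect to $\lambda$, together with the fact that $\lambda\neq 0$ on this interval. For the behaviour near the origin, Remark \ref{Re:Lim} yields $\lim_{\lambda\to 0^+}\varphi'(\lambda)=\frac{\gamma}{1+\beta}>0$; since the numerator tends to a strictly positive limit while the denominator tends to $0^+$, I obtain $\lim_{\lambda\to 0^+}F(\lambda)=+\infty$. For the behaviour at infinity, I would use the two-sided estimate of part i) of Lemma \ref{Le:GMEDerProp}, namely $0<\varphi'(\lambda)<g_2(\lambda)=\frac{\gamma}{1+\beta}\exp\left(-\frac{\lambda}{1+\beta}\right)$; since $g_2(\lambda)\to 0$ as $\lambda\to+\infty$, a squeeze gives $0<F(\lambda)<g_2(\lambda)/\lambda\to 0$, hence $\lim_{\lambda\to+\infty}F(\lambda)=0$.

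Finally I would conclude: as $F$ is continuous on $(0,+\infty)$ with $\lim_{\lambda\to 0^+}F(\lambda)=+\infty$ and $\lim_{\lambda\to+\infty}F(\lambda)=0$, the intermediate value theorem ensures that $F$ attains every value in $(0,+\infty)$. In particular it attains the value $\frac{2}{(1+\beta)\text{Ste}_\infty}$ at some $\lambda>0$, which is precisely a solution of equation (\ref{Eq:VarphiLambda}).

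Regarding difficulty, the argument is a routine intermediate value application once the two boundary limits are in hand, so the genuinely substantial work has already been done in the preparatory results: the continuous dependence of $\varphi'$ on $\lambda$ (Lemma \ref{Le:GMEContDep}) and the explicit bounds on $\varphi'(\lambda)$ (Lemma \ref{Le:GMEDerProp}). The only point demanding a little care is the double role of $\lambda$, as both the right endpoint of the domain $[0,\lambda]$ and the parameter of the differential problem, when invoking continuity; this is exactly what the rescaling $\zeta=\eta/\lambda$ in the proof of Lemma \ref{Le:GMEContDep} is designed to absorb, so no further obstacle arises. I would also note that, since $F$ need not be monotone, this argument establishes only the existence asserted in the statement and not uniqueness of $\lambda$.
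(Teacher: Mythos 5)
Your proposal is correct and follows essentially the same path as the paper: the authors define the same function $H(\lambda)=\varphi'(\lambda)/\lambda$, invoke Lemma \ref{Le:GMEContDep} for continuity and Lemma \ref{Le:GMEDerProp} together with Remark \ref{Re:Lim} for the limits $+\infty$ at $0^+$ and $0$ at $+\infty$, and conclude by the intermediate value theorem since the right-hand side of (\ref{Eq:VarphiLambda}) is positive. Your write-up simply makes explicit the limit computations that the paper leaves as citations, and your closing caveat about non-uniqueness matches the paper's ``at least one solution'' claim.
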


\begin{proof}
For any $\lambda>0$, let $\varphi$ be the only solution in $K$ to problem (\ref{VarphiProblem}) on the domain $[0,\lambda]$. Let also be $H$ the real function defined by:
\begin{equation}\label{H}
H(\lambda)=\frac{\varphi'(\lambda)}{\lambda}\quad \lambda>0.
\end{equation}

Since $H$ is a continuous function (see Lemma \ref{Le:GMEContDep}) that satisfies (see Lemma \ref{Le:GMEDerProp} and Remark \ref{Re:Lim}):
\begin{equation}\label{H0+Infty}
\displaystyle\lim_{\lambda\to 0^+}H(\lambda)=+\infty,
\hspace*{2cm}
\displaystyle\lim_{\lambda\to +\infty}H(\lambda)=0,
\end{equation} 
the theorem follows by recalling that the RHS of equation (\ref{Eq:VarphiLambda}) is a positive number.
\end{proof}

In Figure \ref{Fig:H} we present some plots for the function $H$ defined by (\ref{H}). To compute $H$, is was considered the same numerical framework described at the end of Section \ref{Sect:GMEApprox} with $\lambda\in[0,5]$. Plots in Figure \ref{Fig:H} suggest that the solution found in Theorem \ref{Th:H} is the unique solution to equation (\ref{Eq:VarphiLambda}).

\begin{figure}[H]
\caption{{\em{\small Function $H$ for $\gamma=0.1, 1, 10, 100$, $\lambda\in[0,5]$ and  $\beta=\beta^*_1\simeq\beta_1$ (see Table \ref{Ta:Beta1}).}}}
\vspace*{-0.5cm}
\centering
\begin{subfigure}{0.45\textwidth}
\includegraphics[scale=0.28]{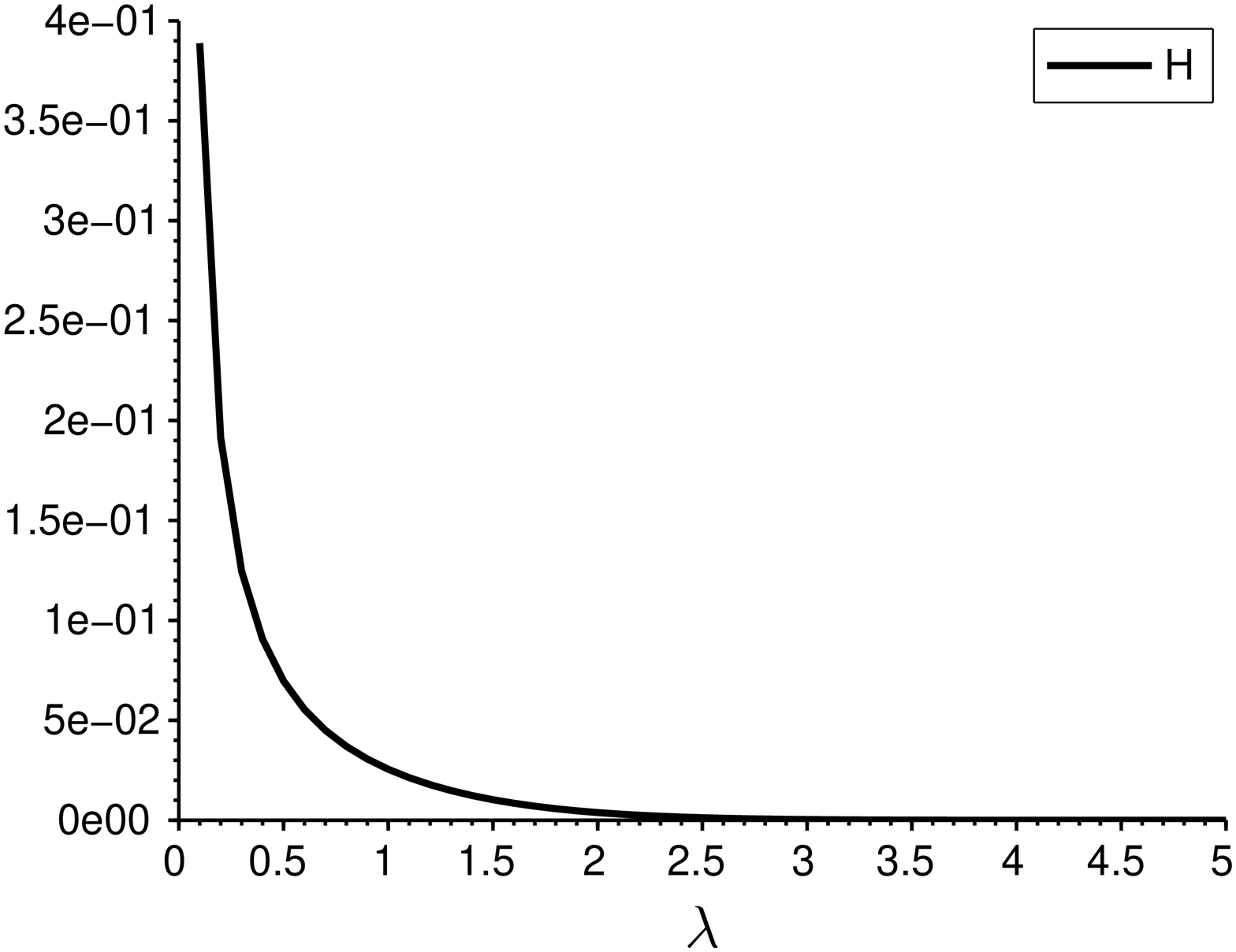}
\caption{{\small $\gamma=0.1$}}
\end{subfigure}
\begin{subfigure}{0.45\textwidth}
\includegraphics[scale=0.28]{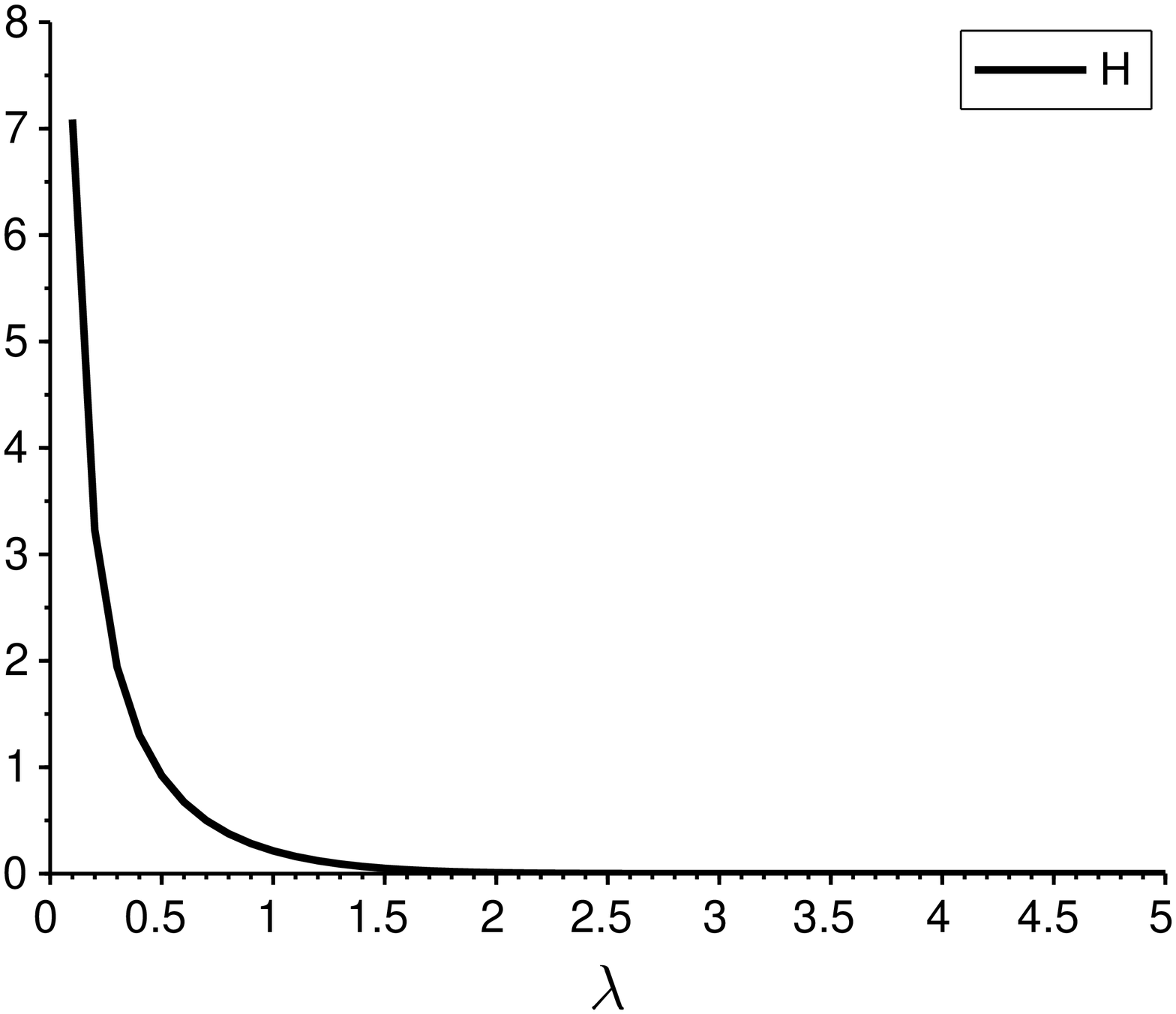}
\caption{{\small $\gamma=1$}}
\end{subfigure}
\begin{subfigure}{0.45\textwidth}
\includegraphics[scale=0.28]{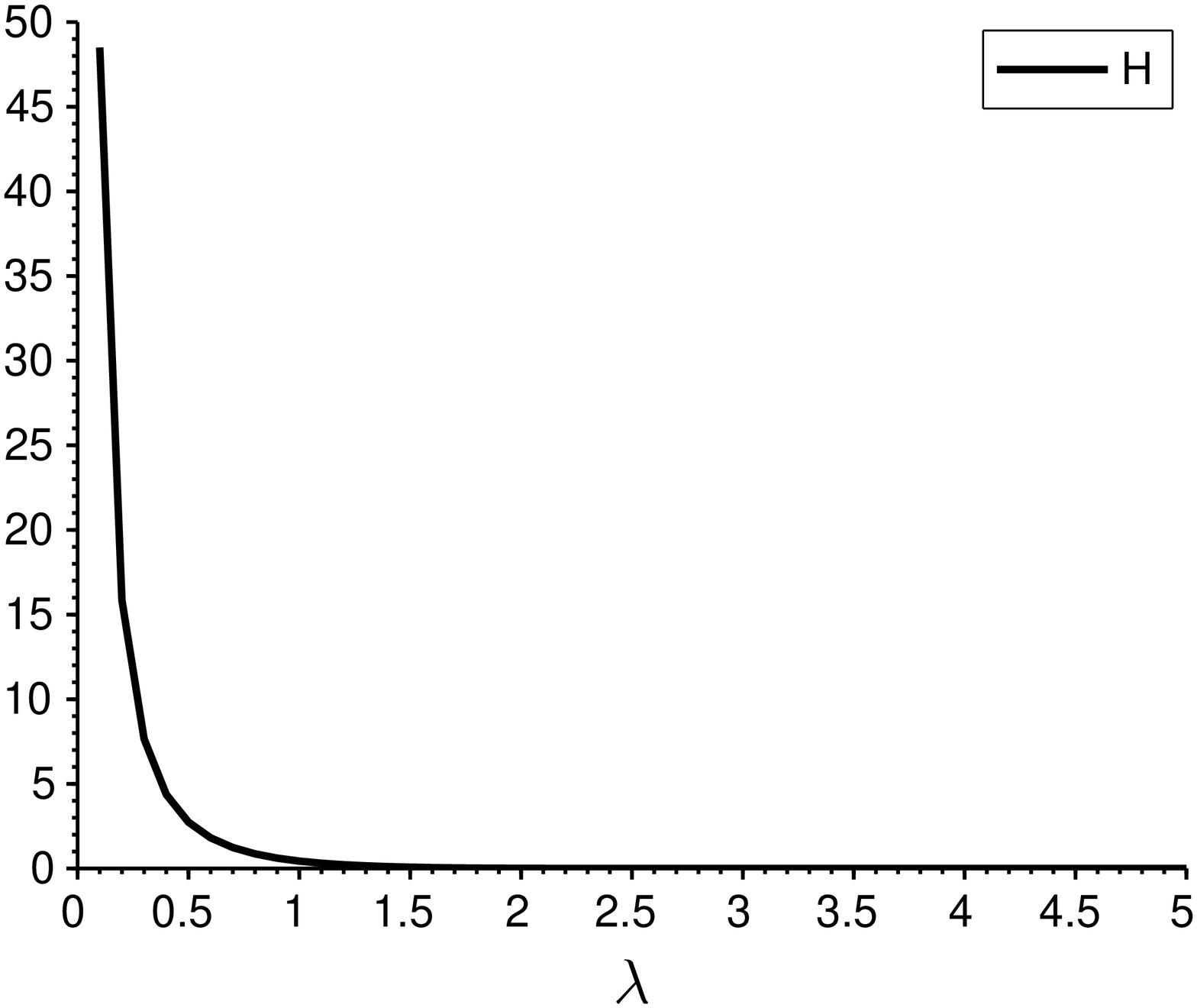}
\caption{{\small $\gamma=10$}}
\end{subfigure}
\begin{subfigure}{0.45\textwidth}
\includegraphics[scale=0.28]{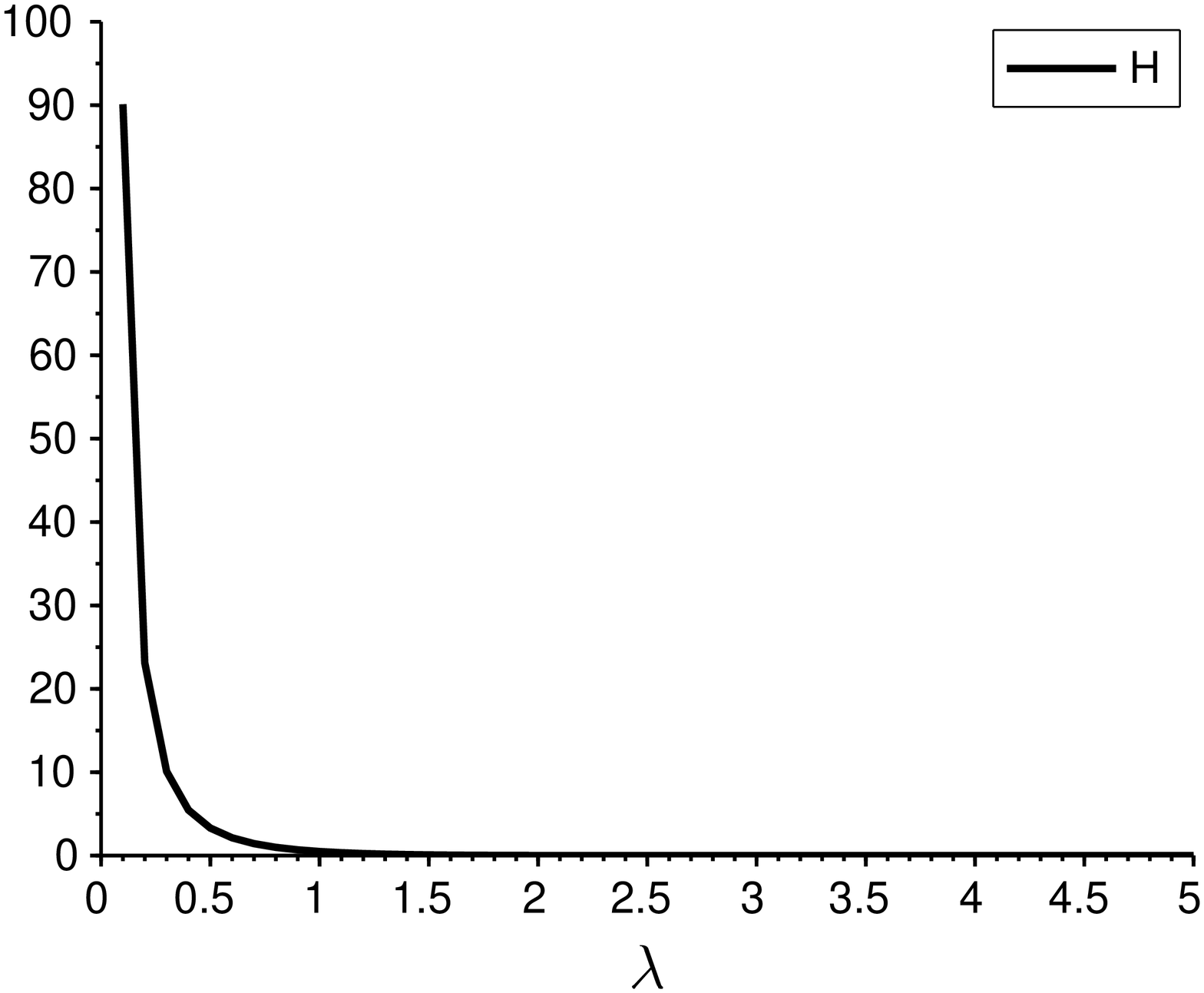}
\caption{{\small $\gamma=100$}}
\end{subfigure}
\label{Fig:H}
\end{figure}

\section{Relation with other Stefan problems}\label{Sect:Comparison}
\subsection{Relation with the Stefan problem with constant thermal conductivity}
Let $\lambda>0$, $\gamma=2\text{Bi}>0$ be given. As it was already noted in Section \ref{Sect:GMEApprox}, the solution to problem (\ref{VarphiProblem}) when $\beta=0$ (constant thermal conductivity) is the function $\varphi_0$ given by (\ref{Varphi0}). From this, we have that condition (\ref{Eq:VarphiLambda}) can be written as:
\vspace{-0.2cm}
\begin{equation}\label{Eq:lambda0}
\lambda\exp(\lambda^2)\left(1+\text{Bi}\erf(\lambda)\right)=\text{Ste}_\infty\text{Bi}.
\end{equation}

Therefore, the Stefan problem (\ref{StefanProblem}) with $\beta=0$ has the similarity solution $T$, $s$ given by:
\begin{equation}\label{Sol:T0}
T(x,t)=\frac{1+\text{Bi}\sqrt{\pi}\erf\left(\frac{x}{2\sqrt{\alpha_0t}}\right)}{1+\text{Bi}\sqrt{\pi}\erf(\lambda)}(T_f-T_\infty)+T_\infty\quad 0<x<s(t),\,t>0
\end{equation}
and (\ref{Sol:s}) if and only if $\lambda$ satisfies (\ref{Eq:lambda0}). This result has been already obtained in \cite{Ta2017}, where it was studied the phase-change process considered here but with constant thermal conductivity.

\subsection{Relation with the Stefan problem with Dirichlet condition}
Let us consider now the Stefan problem (\ref{StefanProblem}) with the Dirichlet boundary condition (\ref{Cond:Dirichlet}) in place of the convective one given by (\ref{Cond:Conv}). We will refer to it as problem (\ref{StefanProblem}$^\dag$). 

By following the same steps that led us to Theorem \ref{Th:characterisation}, we obtain that problem (\ref{StefanProblem}$^\dag$) has the similarity solution $T$, $s$ given by (\ref{SimSol}) if and only if the function $\varphi$ and the parameter $\lambda$ satisfy condition (\ref{Eq:VarphiLambda}) and the differential problem given by equation (\ref{Eq:Varphi}), condition (\ref{Cond:VarphiLambda}) and:
\vspace*{-0.2cm}
\begin{equation}\label{Cond:Varphi0-Dirichlet}
y(0)=0.\tag{\ref{Cond:Varphi0}$^\dag$}
\end{equation}
We will refer to the function $\varphi$ and the parameter $\lambda$ associated to problem (\ref{StefanProblem}$^\dag$) as $\varphi^\dag$ and $\lambda^\dag$, respectively. Problem (\ref{StefanProblem}$^\dag$) was studied in \cite{ChSu1974}, where it was obtained almost the same similarity solution than the one presented above. In \cite{ChSu1974}, the function $\varphi^\dag$ is defined over $\mathbb{R}_0^+$ through equation (\ref{Eq:Varphi}), condition (\ref{Cond:Varphi0-Dirichlet}) and  $y(+\infty)=1$ (that is, as the ME function $\Phi$). This last change adds some extra conditions that must be satisfied by the temperature distribution $T$. But it is avoidable, as we are showing here. Let (\ref{VarphiProblem}$^\dag$) be the problem given by (\ref{Eq:Varphi}), (\ref{Cond:Varphi0-Dirichlet}), (\ref{Cond:VarphiLambda}). In \cite{CeSaTa2017} it was proved the existence and uniqueness of the ME function $\Phi$ for small positive values of $\beta$ through a fixed point strategy. By performing the same analysis for problem (\ref{VarphiProblem}$^\dag$), we obtain that it has a unique non-negative bounded analytic solution $\varphi^\dag$ for any given $\lambda^\dag>0$. Moreover, we have that $\varphi^\dag$ is the unique fixed point of the operator $\tau^\dag$ from $K$ to itself defined by:
\begin{equation}\label{tauDag}
\left(\tau^\dag h\right)(\eta)=C_h\displaystyle\bigintsss_0^\eta
\frac{\exp\left(-2\displaystyle\int_0^x\frac{\xi}{\Psi_h(\xi)}d\xi\right)}{\Psi_h(x)}dx\quad 0<\eta<\lambda,\quad (h\in K)
\end{equation}
where $C_h$ is given by:
\begin{equation}
C_h=\left(\displaystyle\bigintsss_0^\lambda
\frac{\exp\left(-2\displaystyle\int_0^x\frac{\xi}{\Psi_h(\xi)}d\xi\right)}{\Psi_h(x)}dx\right)^{-1}.
\end{equation}

From the definitons of $\tau$ and $\tau^\dag$ given in (\ref{tau}) and (\ref{tauDag}), it follows that $\tau h\to\tau^\dag h$ (pointwise) when $\gamma\to+\infty$ for any function $h\in K$. Then, $\varphi\to\varphi^\dag$ (pointwise) when $\gamma\to+\infty$. When we consider $\gamma=2 Bi$ (see Theorem \ref{Th:Contraction}), $\gamma\to+\infty$ is equivalent to $h_0\to+\infty$. Thus, the solution to problem (\ref{StefanProblem}$^\dag$) can be obtained as the limit case of the solution to problem (\ref{StefanProblem}) when the coefficient $h_0$ that characterizes the heat transfer coefficient at $x=0$ goes to infinity. This agrees well with the physical interpretation of temperature and convective boundary conditions (see Remark \ref{Re:Conv-Dirichlet}, \cite{AlSo1993,CaJa1959}).

We end this Section with some plots for the GME function $\varphi$. From Figure \ref{Fig:GME-ME} it can be seen that it converges pointwise to the solution $\varphi^\dag$ to problem (\ref{VarphiProblem}$^\dag$). By an abuse of notation, we have also referred to $\varphi^\dag$ as ME function. The plots for both GME and ME functions were obtained after solving problems (\ref{VarphiProblem}) and (\ref{VarphiProblem}$^\dag$) for  $\lambda=10$ in the same numerical framework described at the end of Section \ref{Sect:GMEApprox}. Although it was considered $\lambda=10$, functions were drawn over the interval $[0,3]$ aiming at a better visualization.   

\begin{figure}[H]
\caption{{\em{\small GME and ME functions for $\gamma=0.1, 1, 10, 100$, $\lambda=10$ and $\beta=\beta^*_1\simeq\beta_1$ (see Table \ref{Ta:Beta1}).}}}
\vspace*{-0.5cm}
\centering
\begin{subfigure}{0.45\textwidth}
\includegraphics[scale=0.28]{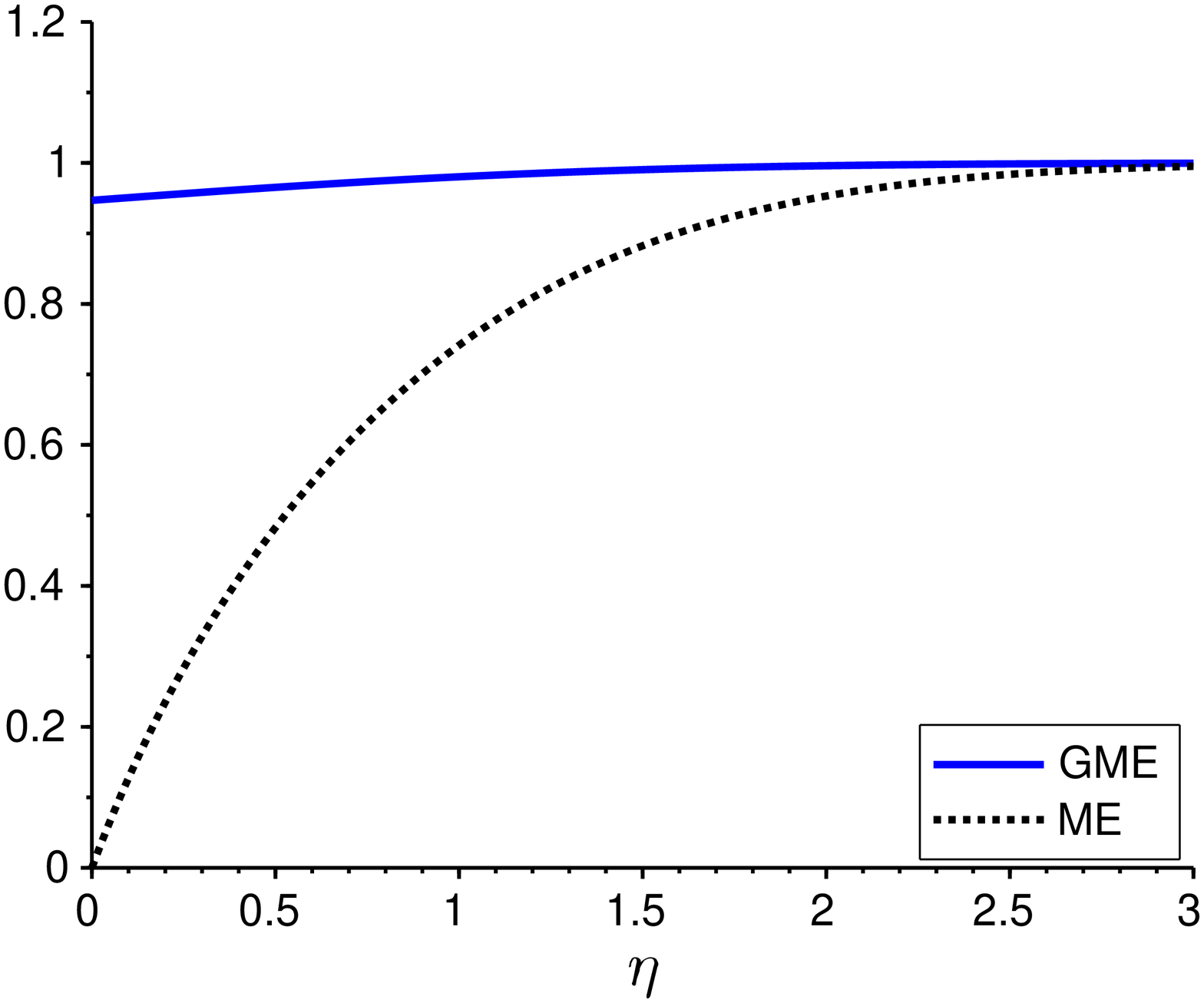}
\caption{{\small $\gamma=0.1$}}
\end{subfigure}
\hspace*{1cm}
\begin{subfigure}{0.45\textwidth}
\includegraphics[scale=0.28]{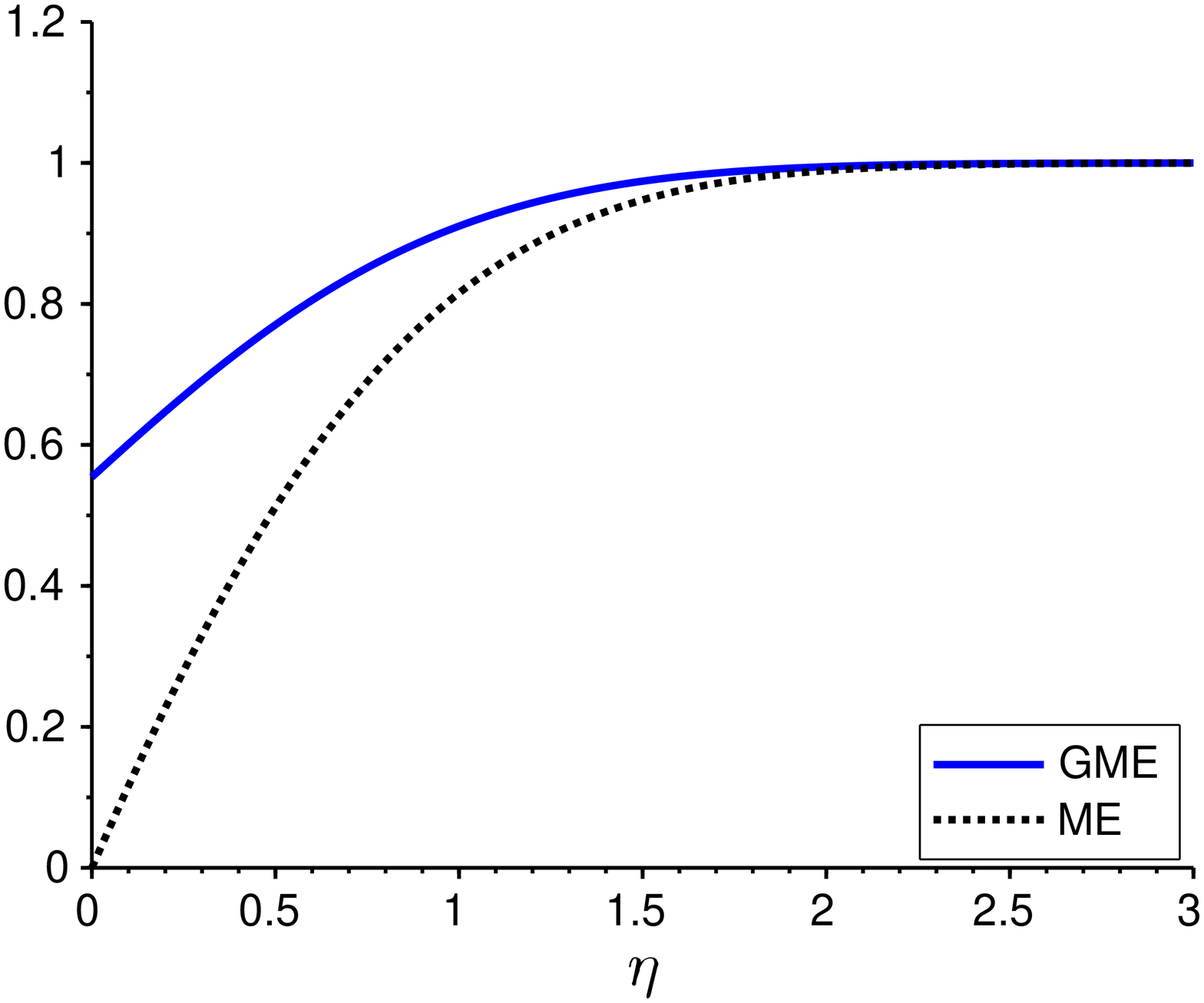}
\caption{{\small $\gamma=1$}}
\end{subfigure}\\[-0.25cm]
\begin{subfigure}{0.45\textwidth}
\includegraphics[scale=0.28]{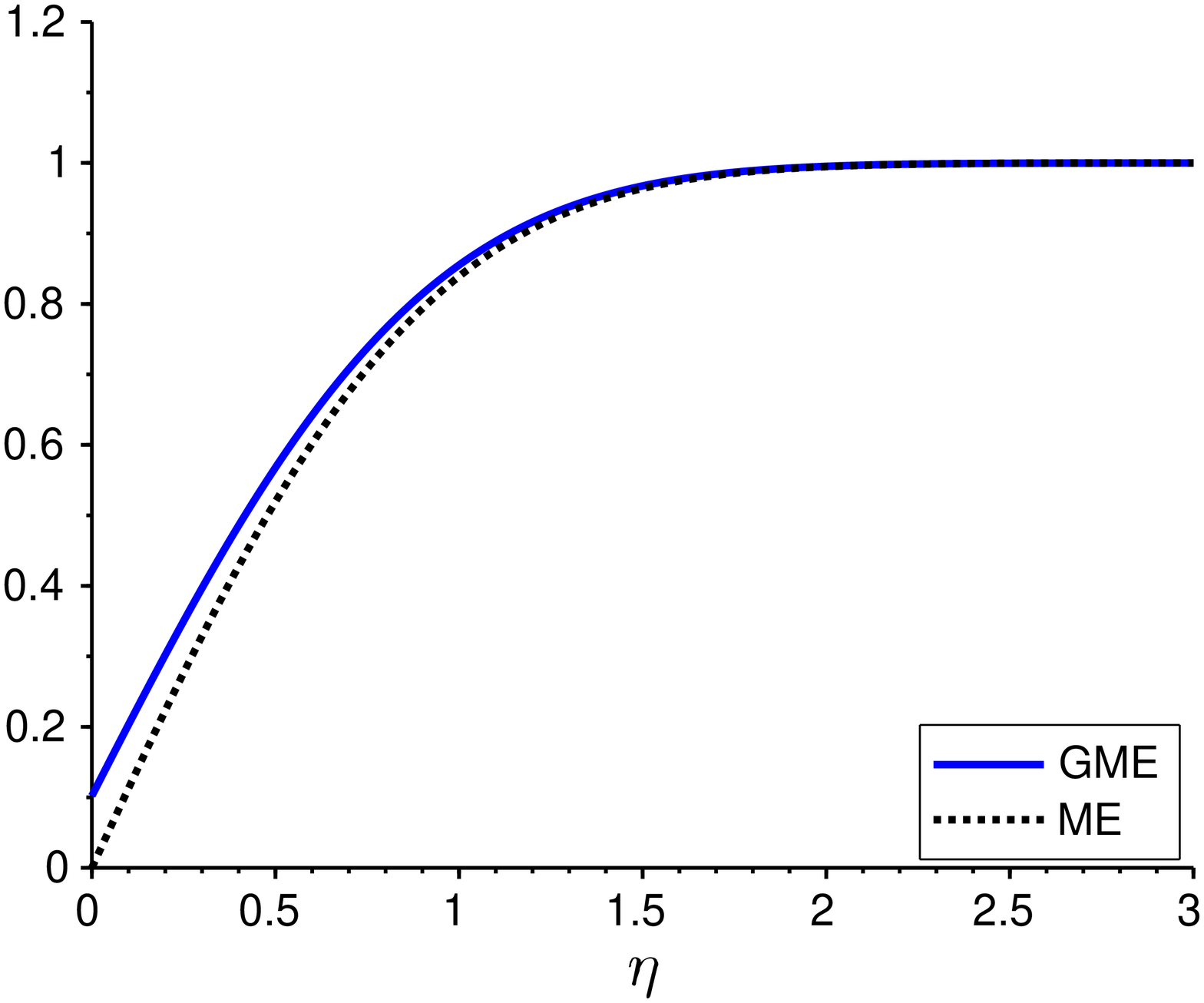}
\caption{{\small $\gamma=10$}}
\end{subfigure}
\hspace*{1cm}
\begin{subfigure}{0.45\textwidth}
\includegraphics[scale=0.28]{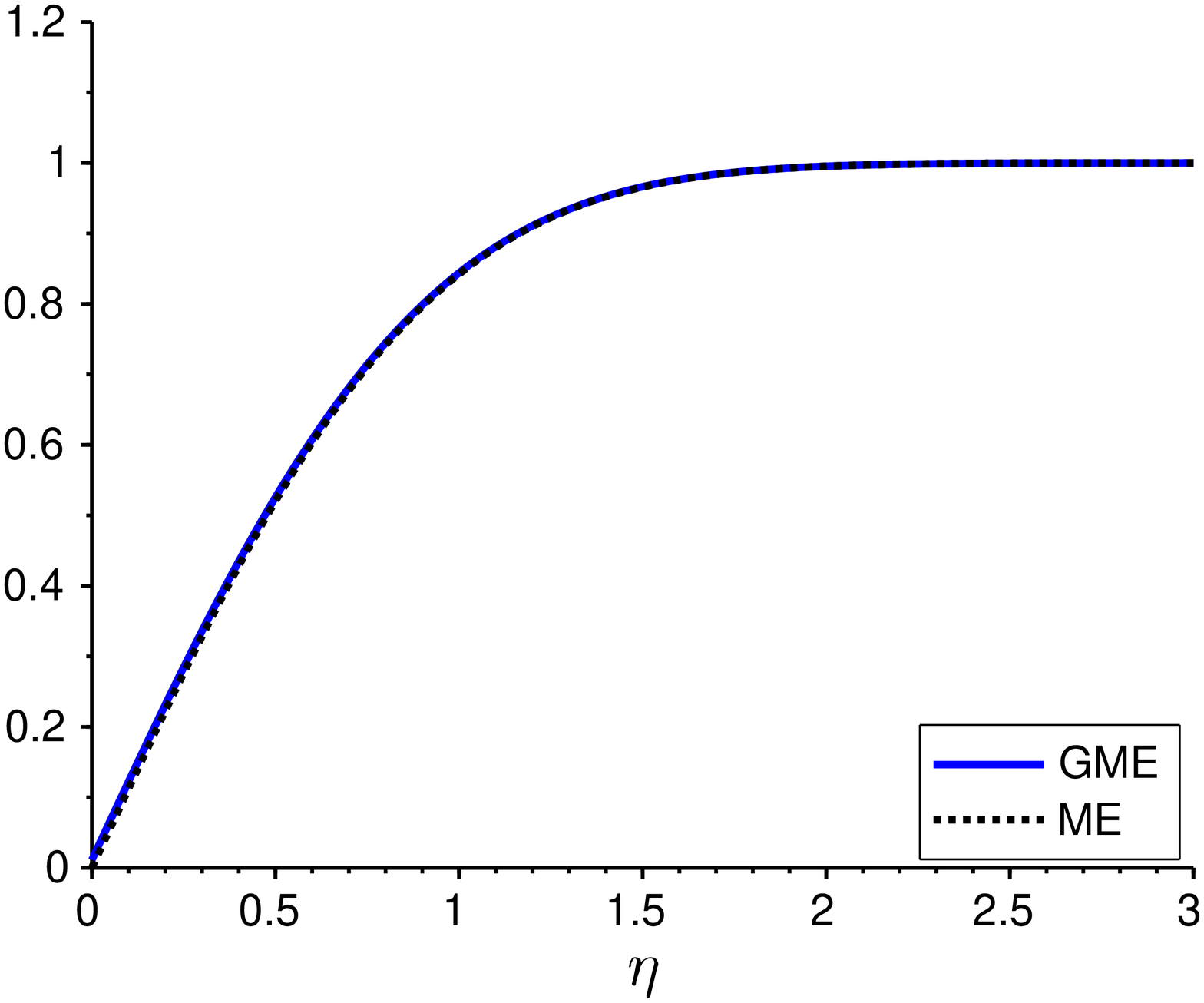}
\caption{{\small $\gamma=100$}}
\end{subfigure}
\label{Fig:GME-ME}
\end{figure}

\section{Conclusions}
In this article we have presented an exact solution of similarity type for a one-phase Stefan problem with temperature-dependent thermal conductivity and a Robin boundary condition. The temperature distribution was defined through a {\em Generalized Modified Error} (GME) function. This was defined as the solution to a nonlinear boundary problem of second order, for which it was proved a result on existence and uniqueness of solutions. From this, the existence of similarity solutions was proved. It was also shown that results for the Stefan problems with either constant thermal conductivity or Dirichlet boundary conditions can be obtained as particular o limit cases of the results presented in this article. 

Since the GME function is only available from numerical computations, it was proposed an strategy to obtain explicit approximations for it. Several values from the parameters involved in the physical problem were considered in the analysis of errors between the GME function and the two proposed approximations. The analysis performed suggest that the choice of the best approximation between those presented here depends on the values of the parameters. Nevertheless, good agreement can be obtained with both of them. From these explicit approximations, those for the temperature distribution can be obtained since it linearly depends on the GME function. In order to give some properties of the temperature distribution, it were also investigated some properties of the GME function. It was proved that it is a non-negative bounded analytic function which is increasing and concave, just as the classical error function is.   

\subsection*{Acknowledgments}
This paper has been partially sponsored by the Project PIP No. 0275 from CONICET-UA (Rosario, Argentina) and AFOSR-SOARD Grant FA 9550-14-1-0122.

\bibliographystyle{plain}
\bibliography{References_2017-06-15}

\end{document}